\documentclass{article}

\usepackage[T2A]{fontenc}
\usepackage[utf8]{inputenc}
\usepackage[russian]{babel}
\usepackage{amsfonts}
\usepackage{amsthm}
\usepackage{amsmath}
\usepackage{amssymb}
\usepackage{cancel}
\usepackage{tabu}
\usepackage{array}
\usepackage{tikz}
\usepackage{hyperref}
\usepackage{makecell}
\usepackage{xcolor}
\usepackage{graphicx}
\usepackage[final]{pdfpages}
\usepackage{bbm}
\usepackage{stmaryrd,scalerel}

\urlstyle{same}

\graphicspath{ {./images/} }

\title{Центральные меры $r$-дифференциальной версии графа Юнга -- Фибоначчи}

\author{В. Ю. Евтушевский}

\begin{document}

\maketitle

\tableofcontents

\newpage

\newtheorem{Lemma}{Лемма}

\newtheorem{Alg}{Алгоритм}

\newtheorem{Col}{Следствие}

\newtheorem{theorem}{Теорема}

\newtheorem{Def}{Определение}

\newtheorem{Prop}{Утверждение}

\newtheorem{Problem}{Задача}

\newtheorem{Zam}{Замечание}

\newtheorem{Oboz}{Обозначение}

\newtheorem{Ex}{Пример}

\newtheorem{Nab}{Наблюдение}

\begin{abstract}
Описывается граница Мартина пространства путей в $r$-дифференциальной
версии графа Юнга -- Фибоначчи и доказывается эргодичность соответствующего
списка мер.
\end{abstract}
\section{Введение}

Рассмотрим слова над алфавитом $\{1,2\}$.
Как известно, количество таких слов с суммой цифр $n$ есть число Фибоначчи $F_{n+1}$
($F_0=0,$ $F_1=1,$ $F_{k+2}=F_{k+1}+F_k$), и это самая распространённая
комбинаторная
интерпретация чисел Фибоначчи. Также можно думать 
о разбиениях полосы $2\times n$ на домино $1\times 2$ и $2\times 1$,
сопоставляя двойки парам горизонтальных домино, а 
единицы вертикальным домино.

Введём на этом множестве слов частичный порядок: будем говорить,
что слово $x$ предшествует слову $y$, если после удаления 
общего суффикса в слове $y$ остаётся не меньше
двоек, чем в слове $x$ остаётся цифр. 

Это действительно частичный порядок, более того, 
соответствующее частично упорядоченное множество
является модулярной
решёткой, известной как решётка Юнга -- Фибоначчи.

\begin{center}
\includegraphics[width=12cm, height=10cm]{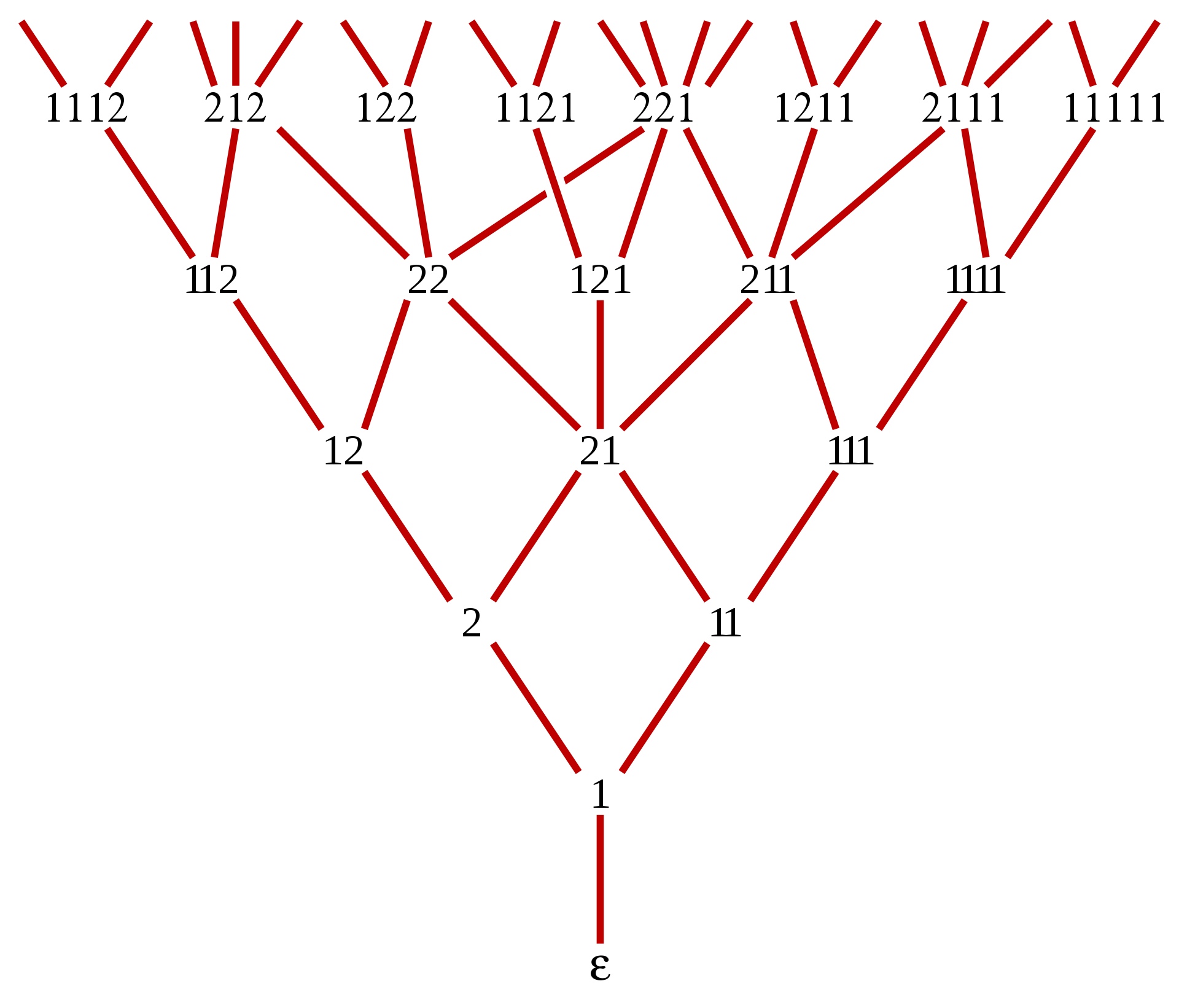}
\end{center}

Графом Юнга -- Фибоначчи (он изображён на рисунке выше) называют диаграмму Хассе этой
решётки. Это градуированный граф, который мы представляем
растущим снизу вверх начиная с пустого слова. 
Градуировкой служит функция суммы цифр. Опишем явно,
как устроены ориентированные
рёбра. Рёбра ``вверх'' из данного
слова $x$ ведут в слова, получаемые из $x$ одной из двух операций:
\begin{enumerate}
    \item  заменить самую левую единицу на двойку;

    \item вставить единицу левее чем самая левая единица.
\end{enumerate}

Этот граф помимо модулярности является $1$-дифференциальным, то есть
для каждой вершины исходящая степень на $1$ превосходит 
входящую степень.

Изучение градуированного графа Юнга -- Фибоначчи было инициировано
в 1988 году одновременно и независимо такими математиками, как Сергей Владимирович Фомин \cite{Fo} и Ричард Стенли \cite{St}.

Причина интереса к нему в том, что существует всего две $1$-дифференциальных
модулярных решётки, вторая --- это решётка диаграмм Юнга, 
имеющая ключевое значение в теории представлений симметрической
группы. 

Центральные вопросы о градуированных графах касаются центральных мер на 
пространстве (бесконечных) путей в графе. Эта точка зрения
последовательно развивалась в работах Анатолия Моисеевича Вершика,
к обзору которого
\cite{Ver} и приводимой там литературе мы отсылаем читателя.

Среди центральных мер выделяют те, которые являются пределами 
мер, индуцированных путями в далёкие вершины --- так называемую
границу Мартина графа.

Граница пространства путей графа Юнга -- Фибоначчи изучалась в работе
Фредерика Гудмана и Сергея Васильевича Керова (2000) \cite{GK}.

В работе \cite{KerGned} доказана эргодичность так называемой 
меры Планшереля на графе Юнга -- Фибоначчи, а
в \cite{BE,Evtuh3} --- эргодичность остальных 
мер, принадлежащих его границе Мартина. 

Граф Юнга -- Фибоначчи имеет известный естественную $r$-дифференциальную
версию для любого $r>1$. Целью настоящей работы является перенос упомянутых
результатов центральных мерах и их эргодичности на случай этого графа. 
\newpage

\section{Число путей}

\renewcommand{\labelitemii}{$\bullet$}

\begin{Oboz}
Пусть $\mathbb{YF}$ -- это граф Юнга -- Фибоначчи.
\end{Oboz}

\begin{Oboz}
Пусть $r\in\mathbb{N}$. Тогда $\mathbb{YF}^r$ -- это градуированный граф, который выглядит следующим образом:
\begin{itemize}
    \item Рассмотрим слова над алфавитом $\{1_1,1_2,\ldots,1_r,2\} = \bigcup_{i=1}^r \{1_i\}\cup\{2\}$ из $r+1$ символа. Каждому символу естественным образом соответствует цифра $1$ или $2$. Символ вида $1_i$ будем называть единицей.
    \item Введём на этом множестве слов частичный порядок: будем говорить, что слово $x$ предшествует слову $y$, если после удаления общего суффикса в слове $y$ остаётся не меньше двоек, чем в слове $x$ остаётся символов. 
    \item Это действительно частичный порядок, более того, соответствующее частично упорядоченное множество является модулярной решёткой, известной как $r$-дифференциальная версия решётки Юнга -- Фибоначчи.
    \item $r$-дифференциальной версией графа Юнга -- Фибоначчи называют диаграмму Хассе этой решётки. Это градуированный граф, который мы представляем растущим снизу вверх, начиная с пустого слова. Градуировкой служит функция суммы цифр. Опишем явно, как устроены рёбра. Рёбра ``вниз'' из данного слова $x$ ведут в слова, получаемые из $x$ одной из двух операций:

\begin{enumerate}
    \item удалить самую левую единицу;
    \item заменить любую двойку, расположенную левее самой левой единицы, на единицу c любым индексом.
\end{enumerate}

\end{itemize}

\end{Oboz}

\begin{Oboz}
Граф $\mathbb{YF}^1$ будем отождествлять с графом $\mathbb{YF}$ путём отождествления символов ${1_1}$ в графе $\mathbb{YF}^1$ и $1$ в графе $\mathbb{YF}.$   
\end{Oboz}

\begin{Def}

Пусть $r\in\mathbb{N}$, $\{\alpha_i\}_{i=1}^\infty\in\{\bigcup_{i=1}^r1_i,2\}^\infty$ -- бесконечная последовательность из единиц с индексами и двоек. Этой последовательности сопоставим ``бесконечно удалённую вершину'' $x=\ldots\alpha_2\alpha_1$ графа $\mathbb{YF}^r.$

\end{Def}

\begin{Oboz}
Пусть $r\in\mathbb{N}$. Тогда множество ``бесконечно удалённых вершин'' графа $\mathbb{YF}^r$ обозначим за $\mathbb{YF}_\infty^r.$
\end{Oboz}

\begin{Oboz}

Пусть $v\in  \bigcup_{r=1}^\infty \left( \mathbb{YF}^r\cup \mathbb{YF}_\infty^r\right) $. Тогда    
\begin{itemize}
    \item  сумму цифр в $v$ обозначим за $|v|;$
    \item  количество цифр в $v$ обозначим за $\#v;$
    \item  количество единиц в $v$ обозначим за $e(v);$
    \item  количество двоек в $v$ обозначим за $d(v).$
\end{itemize}
\end{Oboz}

\begin{Oboz}
Пусть $r\in\mathbb{N},$ $n\in\mathbb{N}_0.$ Тогда
    $$\mathbb{YF}_n^r:=\{v\in\mathbb{YF}^r: \quad |v|=n\}.$$     

\end{Oboz}

\begin{Oboz} 
Пусть $n,m\in\mathbb{N}_0:$ $m\leqslant  n$. Обозначим
    \item $$\overline{n}:=\{0,1,\ldots,n\};$$
    \item $$\overline{m,n}:=\{m,m+1\ldots,n\};$$
    \item $$\overline{m,\infty}:=\{m,m+1\ldots\}.$$
\end{Oboz}

\begin{Oboz}
$\;$
\begin{itemize}
    \item Пусть $r\in\mathbb{N}$, $n,m\in\mathbb{N}_0$, $v_n\in \mathbb{YF}^r_n,$ $v_m\in \mathbb{YF}^r_m.$ Тогда количество путей ``вниз'' в $\mathbb{YF}^r$ вида     $$v_n \to v_{n-1} \to \ldots \to v_{m+1} \to v_m,$$
    таких что  $\forall i \in\overline{m+1,n-1}\quad v_i\in\mathbb{YF}^r_i,$ обозначим за $d_r(v_m,v_n)$.

\end{itemize}
\end{Oboz}

Для перечисления путей в графах $\mathbb{YF}^r$ необходимо ввести следующие функции:
\begin{Oboz}
$$f(x,y,z): \left\{(x,y,z)\subseteq\mathbb{YF}\times \mathbb{N}_0\times\mathbb{N}_0:\;y\in\overline{|x|},\;z\in\overline{\#x}\right\}\to\mathbb{R}$$
-- это функция, определённая следующим образом:

При $z=0$:
\begin{itemize}
\item Если $x\in \mathbb{YF}$ представляется в виде $x=\alpha_1...\alpha_m\alpha_{m+1}...\alpha_n$, где $|\alpha_{m+1}...\alpha_n|=y,$ $ \alpha_i \in \{1,2 \}$, то
$$f(x,y,0):=\frac{1}{(\alpha_{m+1})(\alpha_{m+1}+\alpha_{m+2})...(\alpha_{m+1}+...+\alpha_{n})}\cdot(-1)^{n-m}\cdot$$
$$\cdot\frac{1}{(\alpha_m)(\alpha_m+\alpha_{m-1})(\alpha_m+\alpha_{m-1}+\alpha_{m-2})...(\alpha_m+...+\alpha_1)}=$$
$$=\frac{1}{(-\alpha_{m+1})(-\alpha_{m+1}-\alpha_{m+2})...(-\alpha_{m+1}-...-\alpha_{n})}\cdot$$
$$\cdot\frac{1}{(\alpha_m)(\alpha_m+\alpha_{m-1})(\alpha_m+\alpha_{m-1}+\alpha_{m-2})...(\alpha_m+...+\alpha_1)};$$
\item Если $x\in \mathbb{YF}$ не представляется в виде $x=\alpha_1...\alpha_m\alpha_{m+1}...\alpha_n$, где  $|\alpha_{m+1}...\alpha_n|=y,$ $\alpha_i \in \{1,2 \}$, то
$$f(x,y,0)=0.$$
\end{itemize}

При $z>0$ (рекурсивное определение):
\begin{itemize}
    \item Если $y=0$, то
$$f(x1,0,z)=f(x1,0,0);$$
    \item Если $y>0$, то
$$f(x1,y,z)=f(x1,y,0)+f(x,y-1,z-1);$$
    \item 
\begin{equation*}
f(x2,y,z)= 
 \begin{cases}
   $$\frac{f(x11,y,z+1)}{1-y}$$ &\text {если $y \ne 1$}\\
   0 &\text{если $y=1$.}
 \end{cases}
\end{equation*}
\end{itemize}
\end{Oboz}

\begin{Zam}
Пусть $v \in \mathbb{YF},$ $z\in\overline{\#v}.$ Тогда
    $$f(v,0,z)=\frac{d_1(\varepsilon,v)}{|v|!}.$$
Явные формулы для значения функции $f$ в других точках в дальнейшем не используются.
\end{Zam}

\begin{Oboz}
Определим функцию
$$g(x,y):\left\{(x,y)\in\bigcup_{r=1}^\infty\left(\mathbb{YF}^r\cup\mathbb{YF}_\infty^r\right)\times\mathbb{N}:\;y\leqslant  d(x)\right\} \to \mathbb{N}$$
следующим образом:

Рассмотрим представление $x\in\mathbb{YF}^r\cup\mathbb{YF}_\infty^r$ в виде $$x=\ldots 2\underbrace{1\ldots1}_{\beta_m}2\ldots2\underbrace{1\ldots1}_{\beta_1}2\underbrace{1\ldots1}_{\beta_0},$$
где все единицы имеют индексы от $1$ до $r$, и определим:
\begin{itemize}
    \item $g(x,1)=\beta_0+1;$
    \item $g(x,2)=\beta_0+\beta_1+3;$
    \item $\ldots$
    \item $g(x,m)=\beta_0+\ldots+\beta_{m-1}+2m-1;$
    \item $\ldots$.
\end{itemize}

\end{Oboz}

\begin{Oboz}

Пусть $r\in\mathbb{N},$ $w,v\in\mathbb{YF}^r\cup \mathbb{YF}_\infty^r.$ Тогда:
\begin{itemize}
    \item количество символов в самом длинном общем суффиксе вершин $v$ и $w$ обозначим за $h(w,v)$; 
    \item     количество единиц в самом длинном общем суффиксе вершин $v$ и $w$ обозначим за $e(w,v)$;
    \item      вершину $\mathbb{YF}^r$, которая получатся из $v$ путём удаления самого длинного общего суффикса с $w$, обозначим за $v_w;$
    \item 
    
    \begin{equation*}
\mathbbm{1}_{w,v} = \begin{cases}
   $$1,$$ &\text {если $w_v$ и $v_w$ заканчиваются на единицы с разными индексами,}\\
   $$0,$$ &\text{иначе.}
 \end{cases}
\end{equation*}

\end{itemize}
\end{Oboz}

\begin{Zam}
    Единицы с разными индексами не входят в общий суффикс. Например, $e(221_51_321_2,1_81_11_321_2)=2.$
\end{Zam}

\begin{theorem}[Теорема 1\cite{Evtuh2}, Теорема 1\cite{Evtuh1}] \label{evtuh}
Пусть $w,v \in \mathbb{YF}:$ $|v|\geqslant |w|$. Тогда
$$d_1(w,v)=\sum_{i=0}^{|w|}\left( {f\left(v,i,h(w,v)\right)}\prod_{j=1}^{d(v)}\left(g\left(v,j\right)-i\right)\right).$$ 
\end{theorem}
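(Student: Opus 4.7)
I would induct on the rank difference $n:=|v|-|w|\ge 0$. The base case $n=0$ gives $d_1(w,v)=\delta_{w,v}$: when $w=v$ the common suffix has full length $\#v$, and the recursive simplifications in $f(v,i,\#v)$ together with the vanishing of $\prod_j(g(v,j)-i)$ at specific integer points should yield $1$ on the RHS; when $w\ne v$ but $|w|=|v|$, the common suffix is strictly shorter, $f(v,i,h(w,v))$ admits no further collapse, and the sum over $i$ cancels to $0$.

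For the inductive step $n\ge 1$, I would apply the path-counting recursion
\[
d_1(w,v)=\sum_{v'\lessdot v}d_1(w,v'),
\]
with $v'$ ranging over the immediate predecessors of $v$: those obtained by deleting the leftmost $1$ of $v$ or by replacing a $2$ strictly left of the leftmost $1$ by a $1$. By the induction hypothesis each $d_1(w,v')$ admits the claimed expansion, so the task becomes the algebraic identity asserting that the sum over $v'$ of these expansions reproduces the RHS at $v$.

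The central difficulty is that $f$ and $g$ are defined via recursions on the \emph{rightmost} character of $v$ (the three clauses $f(x1,0,z)$, $f(x1,y,z)$ for $y>0$, and $f(x2,y,z)$), whereas the down-moves modify the \emph{leftmost} portion of $v$. To bridge this mismatch I would split into cases according to the leftmost symbol of $v$: if $v$ begins with $1$, the sum has only one term and the identity reduces to a reindexing of the summation variable $i$; if $v$ begins with a block of $2$'s of length $k\ge 1$, then $k$ children of type~(b) together with at most one child of type~(a) contribute, and the combined sum should telescope once the partial-fraction form of $f(x,y,0)$ is expanded. The three recursive clauses defining $f$ are arranged precisely to supply the cancellations needed here.

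I expect the main obstacle to be bookkeeping: tracking how $d(v')$, $g(v',j)$, and $h(w,v')$ transform under each down-move, and matching the resulting polynomial identity in $i$ against the recursion for $f$ itself. A helpful consistency check along the way is the specialisation $w=\varepsilon$, tested against the Remark $f(v,0,z)=d_1(\varepsilon,v)/|v|!$. Since the statement is already established in \cite{Evtuh1,Evtuh2}, the present paper imports it as a black box; the plan above outlines the inductive scheme those works execute in detail.
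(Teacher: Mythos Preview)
The paper does not prove this theorem at all: it is stated with a citation to \cite{Evtuh1,Evtuh2} and used thereafter as a black box, exactly as you observe in your final sentence. There is therefore no ``paper's own proof'' to compare your plan against.

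As a standalone sketch, your approach is the natural one, and you have put your finger on the genuine obstacle: the recursion defining $f$ peels off the \emph{rightmost} symbol of $v$, while the covering relation in $\mathbb{YF}$ (and hence the recursion $d_1(w,v)=\sum_{v'\lessdot v}d_1(w,v')$) acts on the \emph{leftmost} block. Your proposal to ``telescope once the partial-fraction form of $f(x,y,0)$ is expanded'' is where all the content lives, and the plan as written does not explain how that telescoping actually goes through; in particular it is not clear that tracking $h(w,v')$ across the different predecessors $v'$ interacts cleanly with the third argument of $f$. In \cite{Evtuh1} the argument is in fact organised around the rightmost symbol of $v$ (matching the recursion for $f$), with an auxiliary recursion for $d_1$ tailored to that decomposition, rather than the predecessor sum you propose. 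So while your outline is plausible, it is not the route taken in the cited sources, and the step you flag as ``bookkeeping'' is precisely the substantive part that would need to be worked out in full.
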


\begin{Col}\label{ochev}
Пусть $v \in \mathbb{YF}$. Тогда
$$d_1(\varepsilon,v)=\prod_{j=1}^{d(v)}g\left(v,j\right).$$ 
\end{Col}

\begin{Oboz}
Определим функцию
$$ s: \bigcup_{r=1}^\infty \mathbb{YF}^r \to \mathbb{YF} $$
 забывания индекса следующим образом. Если $v\in\mathbb{YF}^r,$ то $s(v)$ --это вершина графа Юнга -- Фибоначчи, полученная из $v$ путём замен всех символов вида $1_i$ $\left(i\in \overline{1,r}\right)$ на символ $1$. Также иногда вместо $s(v)$ будем писать $\underline{v}$.
\end{Oboz}
\begin{Prop}
Пусть $r\in\mathbb{N}$, $v\in \mathbb{YF}^r$. Тогда
$$d_r(\varepsilon,v)=d_1(\varepsilon,s(v))\cdot r^{d(v)}=r^{d(v)}\cdot\prod_{j=1}^{d(v)}g\left(v,j\right).$$
\end{Prop}
\begin{proof}
Пусть $v\in\mathbb{YF}_n^r$, где $n\in\mathbb{N}_0$. Ясно, что тогда $s(v)\in\mathbb{YF}_n.$

Рассмотрим путь ``вниз'' из $s(v)$ в $\varepsilon$ в графе $\mathbb{YF}$ вида
$$s(v)=s_n\to s_{n-1}\to \ldots \to s_1 \to s_0 = \varepsilon,$$
такой что  $\forall i \in\overline{n}\quad s_i\in\mathbb{YF}_i^1$.

Сопоставим ему все пути из $v$ в $\varepsilon$ в графе $\mathbb{YF}^r$ вида
$$v=v_n\to v_{n-1}\to \ldots \to v_1 \to v_0 = \varepsilon,$$
такие что  $\forall i \in\overline{n}\quad v_i\in\mathbb{YF}^r_i,$ и при этом
$$\forall i\in\overline{n} \quad s(v_i)=s_i.$$
\begin{itemize}
    \item Заметим, что каждый путь в графе $\mathbb{YF}^r$ посчитан ровно один раз.
    \item Рассмотрим путь в графе $\mathbb{YF}$. Наша задача в том, чтобы посчитать число путей в $\mathbb{YF}^r,$ которые ему сопоставлены.        \item Заметим, что все единицы из $s_n$ при движении по пути будут удалены, как и все единицы из $v_n$.
    \item Заметим, что все двойки в $s_n$ при движении по пути будут заменены на единицу, а потом эта единица будет удалена. Соответствующие им двойки в $v_n$ также будут заменены на единицу, но на единицу с любым из индексов, и эта единица позже будет удалена. 
    \item Значит каждому пути в графе $\mathbb{YF}$ соответствует ровно $r^{d(v)}$ путей в графе $\mathbb{YF}^r$. Что и требовалось.
\end{itemize}

\end{proof}

\begin{Oboz}
Пусть $r\in\mathbb{N},$ $v\in\mathbb{YF}^r\cup \mathbb{YF}_\infty^r.$ Тогда:
\begin{itemize}
    \item     при $l\in\overline{|v|}$ вершину, которая получается путём удаления из $v$ суффикса из ровно $l$ последних символов, обозначим за $v[l]\in\mathbb{YF}^r\cup \mathbb{YF}_\infty^r;$
    \item     при $l\in\overline{e(v)}$ вершину, которая получается путём удаления из $v$ суффикса, который начинается ровно с $l$-ой справа единицы, обозначим за $v\{l\}\in\mathbb{YF}^r\cup \mathbb{YF}_\infty^r;$
    \item     при $l\in\overline{e(v)}$ суффикс вершины $v$, который начинается ровно с $l$-ой справа единицы, обозначим за $v\langle l \rangle \in\mathbb{YF}^r\cup \mathbb{YF}_\infty^r;$
    \item     при $l\in\overline{e(v)}$ вершину, которая получается путём замены $l$-ой справа единицы на двойку, обозначим за $v\Lbag l \Rbag  \in\mathbb{YF}^r\cup \mathbb{YF}_\infty^r.$


\end{itemize}
\end{Oboz}

\begin{Oboz}
 Пусть $r\in\mathbb{N},$ $n,m,l\in\mathbb{N}_0,$ $v_n\in \mathbb{YF}_n^r,$ $v_m\in \mathbb{YF}_m^r,$ $u\in \mathbb{YF}^r:$ $\#u=l.$ Тогда количество путей ``вниз'' в $\mathbb{YF}^r$ вида 
    $$v_n u \to v_{n-1} u \to \ldots \to v_{m+1} u \to v_m u,$$
    таких что $\forall i \in\overline{m+1,n-1}\quad v_i\in\mathbb{YF}_i^r,$ а также вершины $v_i,$ $i\in\overline{m,n}$, не заканчиваются на один и тот же символ, обозначим за $d_r(v_m u,v_n u, l)$.

\end{Oboz}

\begin{Zam}
$\;$
    \begin{itemize}
        \item Ясно, что мы рассматриваем только пути, в которых в процессе будут удалены все единицы, содержащиеся в $v_n.$
        \item Ясно, что если $v_n$ и $v_m$ заканчиваются на разные символы, то $d_r(v_m u,v_n u,l)= d_r(v_m,v_n).$
        \item Ясно, что если $v_n$ и $v_m$ заканчиваются на одинаковые символы, то $d_r(v_m u,v_n u,l)= d_r(v_m,v_n) - d_r(v_m[1],v_n[1]).$
        \item Пусть $r\in\mathbb{N},$ $w,v\in\mathbb{YF}^r.$ Тогда 
        $$d_r(w,v)=\sum_{l=0}^{h(w,v)} d_r(w,v,l).$$
    \end{itemize}
\end{Zam}
 
\renewcommand{\labelenumi}{\arabic{enumi}$^\circ$}

\begin{Prop}
Пусть $r\in\mathbb{N},$ $w,v\in\mathbb{YF}^r,$ $l\in\mathbb{N}_0:$  $h(w,v)\geqslant  l$. Тогда
    $$d_r(w,v,l) = d_1(s(w),s(v),l)\cdot r^{\#v-\#w-e(v[l])}.$$
\end{Prop}
\begin{proof}
Пусть $u$ -- это общий суффикс вершин $w$ и $v$ из $l$ символов. Пусть $v\in\mathbb{YF}_{n+|u|}^r,$ $w\in\mathbb{YF}_{m+|u|}^r,$ где $n,m\in\mathbb{N}_0$. Ясно, что тогда $s(v)\in\mathbb{YF}_{n+|u|}^1,$ $s(w)\in\mathbb{YF}_{m+|u|}^1,$  а также что $v=v[l]u,$ $w=w[l]u$.

Рассмотрим путь ``вниз'' из $s(v)$ в $s(w)$ в графе $\mathbb{YF}$ вида
$$s(v)=s_n s(u)\to s_{n-1}s(u)\to \ldots \to s_{m+1}s(u) \to s_m s(u) = s(w),$$
такой что  $\forall i \in\overline{m,n}\quad s_i\in\mathbb{YF}_i$, а также вершины $s_i,$ $i\in\overline{m,n}$, не заканчиваются на один и тот же символ.

Сопоставим ему все пути из $v$ в $w$ в графе $\mathbb{YF}^r$ вида
$$v=v_n u\to v_{n-1}u\to \ldots \to v_{m+1}u \to v_{m} u = w,$$
такие что  $\forall i \in\overline{m,n}\quad v_i\in\mathbb{YF}^r_i,$ и при этом
$$\forall i\in\overline{m,n} \quad s(v_i)=s_i.$$
\begin{itemize}
    \item Заметим, что каждый путь в графе $\mathbb{YF}^r$ посчитан ровно один раз.
    \item Рассмотрим путь в графе $\mathbb{YF}$. Наша задача в том, чтобы посчитать число путей в $\mathbb{YF}^r,$ которые ему сопоставлены. 
    \item Заметим, что все единицы из $s_n$ при движении по пути будут удалены, как и все единицы из $v_n$.
    \item Заметим, что двойки в $s_n$ делятся на 3 вида:
    \begin{itemize}
        \item  Двойки, которые при движении по пути не будут заменены на единицу. Соответствующие им двойки в $v_n$ также не будут заменены на единицу. Таких двоек $d(s_m)$.
        \item 
        Двойки, которые при движении по пути будут заменены на единицу, но эта единица позже не будет удалена. Соответствующие им двойки в $v_n$ также будут заменены на единицу (с каким-то индексом), и эта единица позже не будет удалена. Таких двоек $e(s_m)$.
        \item 
        Двойки, которые при движении по пути будут заменены на единицу, а потом эта единица будет удалена. Соответствующие им двойки в $v_n$ также будут заменены на единицу, но на единицу с любым из индексов, и эта единица позже будет удалена. Таких двоек $d(s_n)-d(s_m)-e(s_m)=d(v[l])-\#w[l]=\#v-\#w-e(v[l]).$ Заметим, что если эта величина отрицательная, то $d_r(w,v,l)=d_1(s(w),s(v),l)=0.$

    \end{itemize}    
    \item Значит, каждому пути в графе $\mathbb{YF}$ соответствует ровно $r^{\#v-\#w-e(v[l])}$ путей в графе $\mathbb{YF}^r$. Что и требовалось.
\end{itemize}
    
\end{proof}

\begin{Col}
Пусть $r\in\mathbb{N}$, $w,v\in\mathbb{YF}^r.$ Тогда
$$d_r(w,v) = \sum_{l=0}^{h(w,v)} d_1(s(w),s(v),l)\cdot r^{\#v-\#w-e(v[l])}.$$
\end{Col}

\renewcommand{\labelenumi}{\arabic{enumi}$)$}

\begin{Col}
Пусть $r\in\mathbb{N}$, $w,v\in\mathbb{YF}^r.$ Тогда:
\begin{enumerate}
    \item Если $\mathbbm{1}_{w,v}=1,$ то
    $$d_r(w,v) = \sum_{l=0}^{h(w,v)} \left(d_1\left(\underline{w}[l],\underline{v}[l]\right)-d_1(\underline{w}[l+1],\underline{v}[l+1])\right)\cdot r^{\#v-\#w-e(v[l])}.$$
    \item Если $\mathbbm{1}_{w,v}=0,$ то
    $$d_r(w,v) = \sum_{l=0}^{h(w,v)-1} \left(d_1(\underline{w}[l],\underline{v}[l])-d_1(\underline{w}[l+1],\underline{v}[l+1])\right)\cdot r^{\#v-\#w-e(v[l])}+d_1\left(\underline{w_v},\underline{v_w}\right)\cdot r^{\#v-\#w-e(v_w)}.$$
   
\end{enumerate}    

\end{Col}

\renewcommand{\labelenumi}{\arabic{enumi}$^\circ$}

\begin{Zam}
Пусть $r\in\mathbb{N}_{\geqslant  2}$, $w,v\in\mathbb{YF}^r:$ $\mathbbm{1}_{w,v}=1.$ Тогда
    $$d_1(\underline{w}[h(w,v)+1],\underline{v}[h(w,v)+1])=d_1\left(\underline{w}\{e(w,v)+1\},\underline{v}\{e(w,v)+1\}\right)=d_1\left(\underline{w_v}[1],\underline{v_w}[1]\right).$$
\end{Zam}

\begin{theorem}
Пусть $r\in\mathbb{N}$, $w,v\in\mathbb{YF}^r.$ Тогда
    $$d_r(w,v) = r^{d(v)-\#w}\cdot\Bigg( d_1(\underline{w},\underline{v})+ \sum_{l=1}^{e(w,v)} d_1(\underline{w}\{l\},\underline{v}\{l\})\cdot \left( r^l-r^{l-1}\right)-$$
    $$ - \mathbbm{1}_{w,v}\cdot d_1(\underline{w}\{e(w,v)+1\},\underline{v}\{e(w,v)+1\})\cdot r^{e(w,v)} \Bigg).$$

\end{theorem}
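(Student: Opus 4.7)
My plan is to start from the already-established corollary
$$d_r(w,v) = \sum_{l=0}^{h(w,v)} d_1(s(w), s(v), l) \cdot r^{\#v - \#w - e(v[l])}$$
and substitute the two-case telescoping expression for $d_1(s(w), s(v), l)$ coming from the preceding corollary. Then I apply summation by parts (Abel's transformation) to convert the telescoping sum over positions $l$ into a sum over the $1$s of the common suffix of $w$ and $v$, which is exactly the form of the target formula.

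Writing $L = h(w,v)$, $a_l = d_1(\underline{w}[l], \underline{v}[l])$ and $c_l = r^{\#v - \#w - e(v[l])}$, in Case 1 ($\mathbbm{1}_{w,v} = 1$) we have $d_r(w,v) = \sum_{l=0}^{L}(a_l - a_{l+1}) c_l$, and Abel summation yields
$$d_r(w,v) = a_0 c_0 + \sum_{l=1}^{L} a_l (c_l - c_{l-1}) - a_{L+1} c_L.$$
In Case 2 ($\mathbbm{1}_{w,v} = 0$), the extra term $d_1(\underline{w_v}, \underline{v_w}) \cdot r^{\#v - \#w - e(v_w)}$ coincides with $a_L c_L$ because $w_v = w[L]$ and $v_w = v[L]$, and the same manipulation collapses the whole expression to $a_0 c_0 + \sum_{l=1}^{L} a_l(c_l - c_{l-1})$ without any boundary correction.

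The heart of the argument is the re-indexing. Since $e(v[l]) = e(v[l-1])$ precisely when the $l$-th symbol from the right of $v$ is a $2$, and $e(v[l]) = e(v[l-1]) - 1$ when it is a $1$, we have $c_l = c_{l-1}$ in the first case and $c_l = r c_{l-1}$ in the second. Hence only the positions $l = p_k$ of the $1$s inside the common suffix contribute to $\sum_{l=1}^{L} a_l(c_l - c_{l-1})$. At $l = p_k$ we have $v[p_k] = v\{k\}$ and $w[p_k] = w\{k\}$ (for $k \leqslant e(w,v)$ the $k$-th $1$ of $v$ from the right lies in the common suffix); moreover the last $p_k - 1$ symbols of $v$ contain exactly $k-1$ ones, so $c_{p_k - 1} = r^{d(v) - \#w + k - 1}$ and $c_{p_k} - c_{p_k - 1} = r^{d(v) - \#w}(r^k - r^{k-1})$.

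Putting everything together, in both cases the bulk equals
$$r^{d(v) - \#w}\left(d_1(\underline{w}, \underline{v}) + \sum_{k=1}^{e(w,v)} d_1(\underline{w}\{k\}, \underline{v}\{k\}) (r^k - r^{k-1})\right).$$
In Case 1 the remaining boundary term $-a_{L+1} c_L$ contributes $-r^{d(v)-\#w+e(w,v)} \cdot d_1(\underline{w}\{e(w,v)+1\}, \underline{v}\{e(w,v)+1\})$ once we use the preceding remark to identify $\underline{w}[L+1]$ with $\underline{w}\{e(w,v)+1\}$ (the remark is valid precisely when $\mathbbm{1}_{w,v} = 1$, since then the $(L+1)$-th symbol from the right of $v$ is its $(e(w,v)+1)$-th $1$). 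Combining the two cases through the indicator $\mathbbm{1}_{w,v}$ gives the claimed identity. I expect the main obstacle to be not the Abel rearrangement itself, which is routine, but the careful bookkeeping of the exponent $\#v - \#w - e(v[l])$ under the transition from position indexing to $1$-indexing, and the verification that the boundary contribution really glues into the single formula with the factor $\mathbbm{1}_{w,v}$.
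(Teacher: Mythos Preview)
Your proposal is correct and follows essentially the same route as the paper: both start from the preceding corollary, apply Abel summation to the telescoping sum $\sum_{l}(a_l-a_{l+1})c_l$, observe that $c_l-c_{l-1}$ vanishes except at the positions of the $1$s in the common suffix, and then re-index by those $1$s while invoking the remark to handle the boundary term in the case $\mathbbm{1}_{w,v}=1$. The paper presents the same computation more tersely, without naming the summation-by-parts step, but the argument is the same.
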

\begin{proof}

Рассмотрим два случая:

\begin{enumerate}
    
    \item Пусть $\mathbbm{1}_{w,v}=1.$ Тогда
    $$d_r(w,v) = \sum_{l=0}^{h(w,v)} \left(d_1(\underline{w}[l],\underline{v}[l])-d_1(\underline{w}[l+1],\underline{v}[l+1])\right)\cdot r^{\#v-\#w-e(v[l])}=$$
    $$=d_1(\underline{w},\underline{v})\cdot r^{\#v-\#w-e(v)}+\sum_{l=1}^{h(w,v)} d_1(\underline{w}[l],\underline{v}[l])\cdot \left( r^{\#v-\#w-e(v[l])}-r^{\#v-\#w-e(v[l-1])}\right)-$$
    $$-d_1(\underline{w}[h(w,v)+1],\underline{v}[h(w,v)+1])\cdot r^{\#v-\#w-e(v[h(w,v)])}=$$
    $$= r^{d(v)-\#w}\Bigg( d_1(\underline{w},\underline{v})+ \sum_{l=1}^{e(w,v)} d_1(\underline{w}\{l\},\underline{v}\{l\})\cdot \left( r^l-r^{l-1}\right) - d_1(\underline{w}\{e(w,v)+1\},\underline{v}\{e(w,v)+1\})\cdot r^{e(w,v)} \Bigg).$$

        \item Пусть $\mathbbm{1}_{w,v}=0.$ Тогда
    $$d_r(w,v) = \sum_{l=0}^{h(w,v)-1} \left(d_1(\underline{w}[l],\underline{v}[l])-d_1(\underline{w}[l+1],\underline{v}[l+1])\right)\cdot r^{\#v-\#w-e(v[l])}+d_1\left(\underline{w_v},\underline{v_w}\right)\cdot r^{\#v-\#w-e(v_w)}=$$
   $$=d_1(\underline{w},\underline{v})\cdot r^{\#v-\#w-e(v)}+\sum_{l=1}^{h(w,v)} d_1(\underline{w}[l],\underline{v}[l])\cdot \left( r^{\#v-\#w-e(v[l])}-r^{\#v-\#w-e(v[l-1])}\right)=$$
    $$= r^{d(v)-\#w}\cdot\left( d_1(\underline{w},\underline{v})+\sum_{l=1}^{e(w,v)} d_1(\underline{w}\{l\},\underline{v}\{l\})\cdot \left( r^l- r^{l-1}\right)\right).$$

\end{enumerate}    
    
\end{proof}

\begin{Oboz}
Пусть $r\in\mathbb{N}_{\geqslant  2},$ $u\in\mathbb{YF}.$ Тогда
    $$S_r(u):= \left\{w\in\mathbb{YF}^r:\quad s(w)=\underline{w}=u\right\}. $$
\end{Oboz}

\begin{Lemma}\label{suuum}
Пусть $r\in\mathbb{N}_{\geqslant  2},$ $v\in\mathbb{YF}^r,$  $u\in\mathbb{YF}.$ Тогда
    $$\sum_{w\in S_r(u)} d_r(w,v) = r^{d(v)-d(u)}\cdot d_1(u,\underline{v}).$$
\end{Lemma}
\begin{proof}
    $$\sum_{w\in S_r(u)} d_r(w,v) = \sum_{w\in S_r(u)} r^{d(v)-\#w}\Bigg( d_1(\underline{w},\underline{v})+ \sum_{l=1}^{e(w,v)} d_1(\underline{w}\{l\},\underline{v}\{l\})\cdot \left( r^l-r^{l-1}\right) -$$
    $$ -\mathbbm{1}_{w,v}\cdot d_1(\underline{w}\{e(w,v)+1\},\underline{v}\{e(w,v)+1\})\cdot r^{e(w,v)} \Bigg)=$$
    $$=  r^{d(v)-\#u}\cdot \sum_{w\in S_r(u)} \Bigg( d_1(u,\underline{v})+ \sum_{l=1}^{e(w,v)} d_1({u}\{l\},\underline{v}\{l\})\cdot \left( r^l-r^{l-1}\right) -$$
    $$- \mathbbm{1}_{w,v}\cdot d_1(u\{e(w,v)+1\},\underline{v}\{e(w,v)+1\})\cdot r^{e(w,v)} \Bigg)=$$
    $$=  r^{d(v)-\#u}\cdot\left( d_1(u,\underline{v}) \cdot \Big|S_r(u)\Big|+\sum_{l=1}^{e\left(u,\underline{v}\right)}d_1({u}\{l\},\underline{v}\{l\})\cdot \left( r^l-r^{l-1}\right)\cdot \Big|\{w\in S_r(u): \quad e(w,v)\geqslant  l \} \Big|\right) - $$
    $$-  r^{d(v)-\#u}\cdot\left(\sum_{l=1}^{e\left(u,\underline{v}\right)}   d_1(u\{l\},\underline{v}\{l\})\cdot r^{l-1} \cdot \Big|\{w\in S_r(u):\quad e(w,v) = l-1 \} \Big| \right) = $$
    $$=  r^{d(v)-\#u}\cdot\left( d_1(u,\underline{v}) \cdot r^{e(u)}+\sum_{l=1}^{e\left(u,\underline{v}\right)}d_1({u}\{l\},\underline{v}\{l\})\cdot \left( r^l-r^{l-1}\right)\cdot r^{e(u)-l}\right) - $$
    $$-  r^{d(v)-\#u}\cdot\left(\sum_{l=1}^{e\left(u,\underline{v}\right)}   d_1(u\{l\},\underline{v}\{l\})\cdot r^{l-1} \cdot r^{e(u)-l}\cdot(r-1)  \right) = r^{d(v)-d(u)}\cdot d_1(u,\underline{v}). $$    
\end{proof}

\newpage
\section{Описание мер}

Зафиксируем $r\in\mathbb{N}_{\geqslant  2}$.

Рассмотрим последовательность вершин $\{v'_n\}_{n=1}^\infty\in\left({\mathbb{YF}}^r\right)^\infty,$ такую, что $|v'_n|$ не ограничено сверху. Если $\forall w\in\mathbb{YF}^r$ 
$$\exists \mu_{\{v'_n\}}(w)=\lim_{n\to \infty} d_r(\varepsilon,w)\frac{d_r(w,v'_n)}{d_r(\varepsilon,v'_n)},$$
то на пространстве бесконечных путей в графе $\mathbb{YF}^r$ существует мера, такая, что мера всех путей, проходящих через вершину $w$ равна $\mu_{\{v'_n\}}(w)$. Наша задача --- найти все такие меры.

\begin{Def}
    Множество всех таких мер называется границей Мартина графа $\mathbb{YF}^r.$
\end{Def}

\begin{Oboz}
Пусть $r\in\mathbb{N},$ $\{v_n\}_{n=1}^\infty\in\left({\mathbb{YF}}^r\right)^\infty,$ $v\in\mathbb{YF}_\infty^r$ такие, что бесконечная последовательность $\{v_n\}$ вершин графа $\mathbb{YF}^r$ посимвольно сходится к ``бесконечно удалённой вершине'' $v$ графа $\mathbb{YF}^r$. Тогда будем писать, что
$$v_n\xrightarrow{n\to\infty}v.$$
\end{Oboz}

\begin{Oboz}
$\;$

\begin{itemize}
    \item Пусть $r\in\mathbb{N},$ $v\in \left(\mathbb{YF}^r\cup\mathbb{YF}_\infty^r\right)$. Тогда 
$$\pi(v):=\prod_{i:g(v,i)> 1  } \frac{g(v,i)-1}{g(v,i)}.$$

    \item Пусть $r\in\mathbb{N},$ $v\in \left(\mathbb{YF}^r\cup\mathbb{YF}_\infty^r\right),$ $k\in\mathbb{N}.$ Тогда 
$$\pi_k(v):=\prod_{i:g(v,i)> k  } \frac{g(v,i)-k}{g(v,i)}.$$
\end{itemize}

\end{Oboz}

\begin{Zam}\label{piravno}
Пусть $r\in\mathbb{N},$ $v\in \mathbb{YF}^r\cup\mathbb{YF}_\infty^r$. Тогда 
$$\pi(v)=\pi(\underline{v}).$$    
\end{Zam}

\begin{Oboz}

Пусть $r\in\mathbb{N}.$ Тогда 
$$\mathbb{YF}_\infty^{r,+}:= \{ v\in\mathbb{YF}_\infty^r: \quad \pi(v)>0 \}. $$    

\end{Oboz}

Итак, у нас есть последовательность вершин $\{v'_n\}_{n=1}^\infty\in\left({\mathbb{YF}}^r\right)^\infty$ такая, что $|v'_n|$ не ограничено сверху. Ясно, что у этой последовательности есть бесконечная подпоследовательность $\{v_n\}_{n=1}^\infty\in\left({\mathbb{YF}}^r\right)^\infty,$ такая, что
\begin{itemize}
    \item $\exists v\in\mathbb{YF}_\infty^r:$
    $$v_n\xrightarrow{n\to\infty}v;$$
    \item $\exists \beta\in[0,1]:$
    $$\pi(v_n)\xrightarrow{n\to\infty}\beta \cdot \pi(v).$$
\end{itemize}

Будем рассматривать эту подпоследовательность $\{v_n\}_{n=1}^\infty\in\left({\mathbb{YF}}^r\right)^\infty.$

\begin{Zam}
    Ясно, что $\forall v\in\mathbb{YF}_\infty^r,$ $\beta\in[0,1]$ такая последовательность существует.
\end{Zam}

\begin{Prop}[Proposition 8.6\cite{GK}]\label{KerovGoodman}
Пусть $\{v_n\}_{n=1}^\infty\in\left({\mathbb{YF}}^r\right)^\infty,$ $v\in\mathbb{YF}_\infty^{1,+}$, $\beta\in(0,1]:$  $v_n\xrightarrow{n\to\infty}v$ и при этом существует предел
$$\lim_{n\to\infty}\frac{\pi(v_n)}{\pi(v)}=\beta.$$
Тогда $\forall i\in\mathbb{N}_{\geqslant 2}$
$$\lim_{n\to\infty}\frac{\pi_i(v_n)}{\pi_i(v)}=\beta^i.$$
\end{Prop}

\renewcommand{\labelenumi}{\arabic{enumi}$)$}

\begin{Prop}\label{betai}
$\;$
\begin{enumerate}
    \item Пусть $r\in\mathbb{N},$ $\{v_n\}_{n=1}^\infty\in\left({\mathbb{YF}}^r\right)^\infty,$ $v\in\mathbb{YF}_\infty^{r,+},$ $\beta\in(0,1]:$
    $$v_n\xrightarrow{n\to\infty}v, \quad \pi(v_n)\xrightarrow{n\to\infty}\beta \cdot \pi(v).$$
    Тогда $\forall a\in\overline{1,d(v)}, i\in\mathbb{N}:$
    $$ \lim_{n\to\infty} \prod_{j=a}^{d(v_n)}\frac{ \displaystyle g\left(v_n,j\right)-i}{\displaystyle g\left(v_n,j\right)}= \beta^i\cdot \prod_{j=a}^{d(v)}\frac{ \displaystyle g\left(v,j\right)-i}{\displaystyle g\left(v,j\right)}.$$

    \item Пусть $r\in\mathbb{N},$ $\{v_n\}_{n=1}^\infty\in\left({\mathbb{YF}}^r\right)^\infty,$ $v\in\mathbb{YF}_\infty^r:$ 
    $$v_n\xrightarrow{n\to\infty}v, \quad \pi(v_n)\xrightarrow{n\to\infty}0.$$
        Тогда $\forall a\in\overline{1,d(v)}, i\in\mathbb{N}:$
            $$ \lim_{n\to\infty} \prod_{j=a}^{d(v_n)}\frac{ \displaystyle g\left(v_n,j\right)-i}{\displaystyle g\left(v_n,j\right)}= 0.$$

\end{enumerate}

\end{Prop}
\begin{proof}
\renewcommand{\labelenumi}{\arabic{enumi}$^\circ$}

    \renewcommand{\labelenumi}{\arabic{enumi}$)$}


        $$ \lim_{n\to\infty} \prod_{j=a}^{d(v_n)}\frac{ \displaystyle g\left(v_n,j\right)-i}{\displaystyle g\left(v_n,j\right)}=$$
        $$=\lim_{n\to\infty} \prod_{\begin{smallmatrix}   j\in\overline{a,d(v_n)}:       \\   g(v_n,j)\leqslant  i    \end{smallmatrix}}   \frac{ \displaystyle g\left(v_n,j\right)-i}{\displaystyle g\left(v_n,j\right)}\cdot \prod_{\begin{smallmatrix}   j\in\overline{a,d(v_n)}:       \\   g(v_n,j)> i    \end{smallmatrix}}   \frac{ \displaystyle g\left(v_n,j\right)-i}{\displaystyle g\left(v_n,j\right)}\cdot \prod_{\begin{smallmatrix}   j\in\overline{1,a-1}:       \\   g(v_n,j)> i    \end{smallmatrix}}   \frac{ \displaystyle g\left(v_n,j\right)-i}{\displaystyle g\left(v_n,j\right)} \cdot \prod_{\begin{smallmatrix}   j\in\overline{1,a-1}:       \\   g(v_n,j)> i    \end{smallmatrix}}   \frac{\displaystyle g\left(v_n,j\right)}{ \displaystyle g\left(v_n,j\right)-i}=$$
        $$ = \prod_{\begin{smallmatrix}   j\in\overline{a,d(v)}:       \\   g(v,j)\leqslant  i    \end{smallmatrix}}   \frac{ \displaystyle g\left(v,j\right)-i}{\displaystyle g\left(v,j\right)}\cdot \prod_{\begin{smallmatrix}   j\in\overline{1,a-1}:       \\   g(v,j)> i    \end{smallmatrix}}   \frac{\displaystyle g\left(v,j\right)}{ \displaystyle g\left(v,j\right)-i} \cdot \lim_{n\to\infty} \prod_{\begin{smallmatrix}   j\in\overline{1,d(v_n)}:       \\   g(v_n,j)> i    \end{smallmatrix}}   \frac{ \displaystyle g\left(v_n,j\right)-i}{\displaystyle g\left(v_n,j\right)}.$$
    
Теперь будем рассматривать оба пункта отдельно.
\begin{enumerate}
    \item  В этом случае данное выражение равняется следующему:
    $$ \prod_{\begin{smallmatrix}   j\in\overline{a,d(v)}:       \\   g(v,j)\leqslant  i    \end{smallmatrix}}   \frac{ \displaystyle g\left(v,j\right)-i}{\displaystyle g\left(v,j\right)}\cdot \prod_{\begin{smallmatrix}   j\in\overline{1,a-1}:       \\   g(v,j)> i    \end{smallmatrix}}   \frac{\displaystyle g\left(v,j\right)}{ \displaystyle g\left(v,j\right)-i} \cdot \lim_{n\to\infty} \pi_i(v_n)  =$$
    $$=\prod_{\begin{smallmatrix}   j\in\overline{a,d(v)}:       \\   g(v,j)\leqslant  i    \end{smallmatrix}}   \frac{ \displaystyle g\left(v,j\right)-i}{\displaystyle g\left(v,j\right)}\cdot \prod_{\begin{smallmatrix}   j\in\overline{1,a-1}:       \\   g(v,j)> i    \end{smallmatrix}}   \frac{\displaystyle g\left(v,j\right)}{ \displaystyle g\left(v,j\right)-i} \cdot \beta^i\cdot \pi_i(v)=$$
    $$=\beta^i\cdot\prod_{\begin{smallmatrix}   j\in\overline{a,d(v)}:       \\   g(v,j)\leqslant  i    \end{smallmatrix}}   \frac{ \displaystyle g\left(v,j\right)-i}{\displaystyle g\left(v,j\right)}\cdot \prod_{\begin{smallmatrix}   j\in\overline{1,a-1}:       \\   g(v,j)> i    \end{smallmatrix}}   \frac{\displaystyle g\left(v,j\right)}{ \displaystyle g\left(v,j\right)-i} \cdot  \prod_{\begin{smallmatrix}   j\in\overline{1,d(v)}:       \\   g(v,j)> i    \end{smallmatrix}}   \frac{ \displaystyle g\left(v,j\right)-i}{\displaystyle g\left(v,j\right)} = \beta^i\cdot \prod_{j=a}^{d(v)}\frac{ \displaystyle g\left(v,j\right)-i}{\displaystyle g\left(v,j\right)}.$$
\item В этом случае рассмотрим только предел. Заметим, что выражение под ним всегда неотрицательное.
        $$ \lim_{n\to\infty} \prod_{\begin{smallmatrix}   j\in\overline{1,d(v_n)}:       \\   g(v_n,j)> i    \end{smallmatrix}}   \frac{ \displaystyle g\left(v_n,j\right)-i}{\displaystyle g\left(v_n,j\right)}\leqslant  \lim_{n\to\infty} \prod_{\begin{smallmatrix}   j\in\overline{1,d(v_n)}:       \\   g(v_n,j)> i    \end{smallmatrix}}   \frac{ \displaystyle g\left(v_n,j\right)-1}{\displaystyle g\left(v_n,j\right)} \leqslant  \lim_{n\to\infty} \pi(v_n) =0.  $$

\end{enumerate}

\end{proof}

\begin{theorem}[Theorem 8.7\cite{GK}]
$\;$
\begin{enumerate}
    \item Пусть $\{v_n\}_{n=1}^\infty\in\left({\mathbb{YF}}\right)^\infty,$ $v\in\mathbb{YF}_\infty^{1,+},$ $\beta\in(0,1]:$
    $$v_n\xrightarrow{n\to\infty}v, \quad \pi(v_n)\xrightarrow{n\to\infty}\beta \cdot \pi(v).$$
    Тогда $$\forall w\in\mathbb{YF} \quad \exists \mu_{\{v_n\}}(w)=\lim_{n\to \infty} d_1(\varepsilon,w)\frac{d_1(w,v_n)}{d_1(\varepsilon,v_n)},$$
причём значение этого предела зависит только от $w,$ $v$ и $\beta$.
    \item Пусть $\{v_n\}_{n=1}^\infty\in\left({\mathbb{YF}}\right)^\infty,$ $v\in\mathbb{YF}_\infty^1:$ 
    $$v_n\xrightarrow{n\to\infty}v, \quad \pi(v_n)\xrightarrow{n\to\infty}0.$$
    Тогда $$\forall w\in\mathbb{YF} \quad \exists \mu_{\{v_n\}}(w)=\lim_{n\to \infty} d_1(\varepsilon,w)\frac{d_1(w,v_n)}{d_1(\varepsilon,v_n)}=\frac{d_1(\varepsilon,w)^2}{|w|!},$$
то есть значение этого предела зависит только от $w$.
    \item Все вышеперечисленные меры различны.
\end{enumerate}

\begin{Def}
    Мера из второго пункта данной теоремы называется мерой Планшереля и обозначается как $\mu_P(w).$ Меры из первого пункта обозначаются как $\mu_{v,\beta}(w)$.
\end{Def}
\end{theorem}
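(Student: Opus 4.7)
The plan is to unfold $d_1(w, v_n)$ using Theorem~\ref{evtuh}, normalize by $d_1(\varepsilon, v_n) = \prod_{j=1}^{d(v_n)} g(v_n, j)$ via Corollary~\ref{ochev}, and pass to the limit term by term. After substituting, the Martin kernel $d_1(\varepsilon, w) \cdot d_1(w, v_n)/d_1(\varepsilon, v_n)$ becomes a finite sum over $i \in \{0, 1, \ldots, |w|\}$ of products of two factors: a coefficient $f(v_n, i, h(w, v_n))$ coming from the recursive function $f$, and a product $\prod_{j=1}^{d(v_n)} (g(v_n, j) - i)/g(v_n, j)$ over the $g$-sequence of $v_n$.

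The first step is to note that $h(w, v_n)$ stabilizes to $h(w, v)$ for all large $n$, since $v_n$ converges to $v$ symbol by symbol from the right. The second, main technical step is to unfold the recursion for $f(v_n, i, h(w, v))$ using $f(x1, y, z) = f(x1, y, 0) + f(x, y-1, z-1)$ and $f(x2, y, z) = f(x11, y, z+1)/(1-y)$, reducing it to a finite $\mathbb{Q}$-linear combination of terms $f(u, 0, 0) = d_1(\varepsilon, u)/|u|!$ on truncations $u$ of $v_n$; because the recursion only ever touches the rightmost symbols, for large $n$ the required truncations depend only on $v$ and $w$, and the associated $f$-values stabilize. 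The third step is to evaluate the inner $g$-product via Proposition~\ref{betai}: in Case~1 (when $v \in \mathbb{YF}_\infty^{1,+}$ and $\pi(v_n) \to \beta \cdot \pi(v)$ with $\beta \in (0, 1]$) it tends to $\beta^i \prod_{j=1}^{d(v)} (g(v, j) - i)/g(v, j)$, and in Case~2 ($\pi(v_n) \to 0$) it vanishes for every $i \geq 1$ while remaining $1$ for $i = 0$.

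Combining, in Case~1 one reads off a closed expression for $\mu_{v, \beta}(w)$ that manifestly depends only on $w$, $v$, $\beta$. In Case~2 only the $i = 0$ contribution survives, and using $f(v_n, 0, h(w, v)) = d_1(\varepsilon, v_n)/|v_n|!$ together with the explicit combinatorial unfolding from Step~2 one checks that the remaining expression collapses to $d_1(\varepsilon, w)/|w|!$, yielding the Plancherel measure $\mu_P(w) = d_1(\varepsilon, w)^2/|w|!$. For part (3), distinctness is proved by evaluating against test vertices $w$ with prescribed long suffixes matching those of $v$: the factors $\beta^i \prod_{j=1}^{d(v)} (g(v, j) - i)/g(v, j)$ encode both the tail structure of $v$ (through the sequence $g(v, j)$) and the geometric decay rate $\beta$, so $\{\mu_{v, \beta}(w)\}_w$ uniquely recovers $(v, \beta)$, and the Plancherel measure is separated because its formula is ``tail-blind.''

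The main obstacle is the bookkeeping in Step~2, together with the Case~2 telescoping. The rule $f(x2, y, z) = f(x11, y, z+1)/(1-y)$ can in principle drag the recursion arbitrarily deep into $v_n$, so one has to argue that for $i \leq |w|$ and $z = h(w, v)$ the recursion terminates at depth bounded purely in terms of $|w|$; moreover, in the Plancherel regime one must check delicately that the linear combination of truncated $d_1(\varepsilon, u)/|u|!$ values reconstructs exactly the expected coefficient $d_1(\varepsilon, w)/|w|!$, since each individual $i \geq 1$ summand already vanishes by Proposition~\ref{betai}~(2) but the $i = 0$ term, as a bare ratio $d_1(\varepsilon, v_n)/|v_n|!$, knows nothing about $w$ on its own and must therefore combine nontrivially with the $i \geq 1$ coefficients before the $g$-product limit is taken.
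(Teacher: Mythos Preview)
The paper does not give its own proof of this statement: it is quoted from \cite{GK}. However, the very next theorem (the $r$-differential analogue) is proved in the paper, and its proof specialised to $r=1$ is exactly the argument one would use here. Comparing your proposal with that argument reveals a concrete problem.

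You apply Theorem~\ref{evtuh} with $f(v_n,i,h(w,v_n))$, i.e.\ with the \emph{large} vertex in the first slot of $f$, and this drives all of your Step~2 and your ``main obstacle'' paragraph: you worry that the recursion for $f(v_n,\cdot,\cdot)$ may reach arbitrarily deep into $v_n$, and in the Plancherel case you correctly observe that $f(v_n,0,z)=d_1(\varepsilon,v_n)/|v_n|!$ carries no information about $w$, forcing you to imagine a delicate cancellation with the $i\ge 1$ terms \emph{before} passing to the limit. None of this is needed. As the paper's own computation in the proof of the $r$-version makes explicit, the correct instance of the path-counting formula has the \emph{small} vertex in the first slot:
\[
\frac{d_1(w,v_n)}{d_1(\varepsilon,v_n)}
=\sum_{i=0}^{|w|} f\bigl(w,i,h(w,v_n)\bigr)\,
\prod_{j=1}^{d(v_n)}\frac{g(v_n,j)-i}{g(v_n,j)} .
\]
(The statement of Theorem~\ref{evtuh} in the paper appears to contain a typo in the first argument of $f$; the usage in the subsequent proof, together with the remark $f(u,0,z)=d_1(\varepsilon,u)/|u|!$, confirms that the intended argument is $w$.) With this correction your Step~2 evaporates: once $n$ is large enough that $h(w,v_n)=h(w,v)$, the coefficient $f(w,i,h(w,v))$ is a fixed number depending only on $w$ and $v$, and one passes to the limit in the $g$-product alone via Proposition~\ref{betai}. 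In Case~2 only $i=0$ survives and $f(w,0,h(w,v))=d_1(\varepsilon,w)/|w|!$ gives the Plancherel value immediately, with no telescoping required.

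In short: your overall plan (expand via Theorem~\ref{evtuh}, stabilise $h$, use Proposition~\ref{betai}) is the right one and matches the paper, but you have been misled by a typo into inserting $v_n$ into $f$, which creates spurious difficulties and leaves the Plancherel identification unjustified as written. Part~(3) is not addressed in this paper either; your sketch for it is plausible but would need the explicit limit formula for $\mu_{v,\beta}$ to be carried out.
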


\begin{theorem}
$\;$
\begin{enumerate}
    \item Пусть $r\in\mathbb{N},$ $\{v_n\}_{n=1}^\infty\in\left({\mathbb{YF}}^r\right)^\infty,$ $v\in\mathbb{YF}_\infty^{r,+},$ $\beta\in(0,1]:$
    $$v_n\xrightarrow{n\to\infty}v, \quad \pi(v_n)\xrightarrow{n\to\infty}\beta \cdot \pi(v).$$
    Тогда $$\forall w\in\mathbb{YF}^r \quad \exists \mu_{\{v_n\}}(w)=\lim_{n\to \infty} d_r(\varepsilon,w)\frac{d_r(w,v_n)}{d_r(\varepsilon,v_n)},$$
причём значение этого предела зависит только от $w,$ $v$ и $\beta$.
    \item Пусть $r\in\mathbb{N},$ $\{v_n\}_{n=1}^\infty\in\left({\mathbb{YF}}^r\right)^\infty,$ $v\in\mathbb{YF}_\infty^r:$ 
    $$v_n\xrightarrow{n\to\infty}v, \quad \pi(v_n)\xrightarrow{n\to\infty}0.$$
    Тогда $$\forall w\in\mathbb{YF}^r \quad \exists \mu_{\{v_n\}}(w)=\lim_{n\to \infty} d_r(\varepsilon,w)\frac{d_r(w,v_n)}{d_r(\varepsilon,v_n)}= \frac{d_1(\varepsilon,\underline{w})^2}{|{w}|!\cdot r^{e(w)}}=\frac{\mu_{P}(\underline{w})}{r^{e(w)}},$$
то есть значение этого предела зависит только от $w$.

\end{enumerate}
\end{theorem}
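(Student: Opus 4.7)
The strategy is to reduce the computation of the $r$-Martin kernel to $\mathbb{YF}^1$-quantities, for which Theorem~8.7 of \cite{GK} and Proposition~\ref{betai} supply the needed convergence. Substituting the closed-form expression from the preceding Theorem, and using $d_r(\varepsilon, v) = r^{d(v)} d_1(\varepsilon, \underline{v})$ together with $d_r(\varepsilon, w) = r^{d(w)} d_1(\varepsilon, \underline{w})$, the factor $r^{d(v_n)}$ cancels and one obtains
$$d_r(\varepsilon, w) \frac{d_r(w, v_n)}{d_r(\varepsilon, v_n)} = \frac{d_1(\varepsilon, \underline{w})}{r^{e(w)} d_1(\varepsilon, \underline{v_n})} \Bigg( d_1(\underline{w}, \underline{v_n}) + \sum_{l=1}^{e(w, v_n)} d_1(\underline{w}\{l\}, \underline{v_n}\{l\})(r^l - r^{l-1}) - \mathbbm{1}_{w, v_n}\, d_1(\underline{w}\{e(w, v_n)+1\}, \underline{v_n}\{e(w, v_n)+1\})\, r^{e(w, v_n)} \Bigg).$$
Since $v_n \to v$ symbol-by-symbol from the right, for $n$ large enough the discrete data $e(w, v_n)$, $\mathbbm{1}_{w, v_n}$, and the right tails of the words $\underline{v_n}\{l\}$ all stabilize.

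The next step is to factor each summand as
$$\frac{d_1(\underline{w}\{l\}, \underline{v_n}\{l\})}{d_1(\varepsilon, \underline{v_n})} = \frac{1}{d_1(\varepsilon, \underline{w}\{l\})} \Bigg( d_1(\varepsilon, \underline{w}\{l\}) \frac{d_1(\underline{w}\{l\}, \underline{v_n}\{l\})}{d_1(\varepsilon, \underline{v_n}\{l\})} \Bigg) \cdot \frac{d_1(\varepsilon, \underline{v_n}\{l\})}{d_1(\varepsilon, \underline{v_n})}.$$
The parenthesised factor is precisely the $\mathbb{YF}^1$-Martin kernel of $\underline{w}\{l\}$ along $\underline{v_n}\{l\} \to \underline{v}\{l\}$, to which Theorem~8.7 of \cite{GK} applies. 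For the last ratio, write $\underline{v_n} = \underline{v_n}\{l\} \cdot T_l$ where $T_l$ is the fixed tail (of total sum $|T_l|$ and containing $k := d(T_l)$ twos); a direct inspection of the $g$-decomposition yields the identity $g(\underline{v_n}, k + j) = |T_l| + g(\underline{v_n}\{l\}, j)$ for every $j \geq 1$, and hence by Corollary~\ref{ochev}
$$\frac{d_1(\varepsilon, \underline{v_n}\{l\})}{d_1(\varepsilon, \underline{v_n})} = \frac{1}{\prod_{i=1}^{k} g(\underline{v}, i)} \prod_{j=1}^{d(\underline{v_n}\{l\})} \frac{g(\underline{v_n}\{l\}, j)}{g(\underline{v_n}\{l\}, j) + |T_l|}.$$

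In part~1, this last product is controlled by Proposition~\ref{betai} with $i = |T_l|$ applied to the auxiliary sequence $\underline{v_n}\{l\} \to \underline{v}\{l\}$; the prerequisite $\pi(\underline{v_n}\{l\})/\pi(\underline{v}\{l\}) \to \beta$ is extracted from the same $g$-identity together with Proposition~8.6 of \cite{GK}, and assembling all the pieces gives a limit depending only on $w$, $v$, and $\beta$. In part~2, the hypothesis $\pi(v_n) \to 0$ forces $\pi(\underline{v_n}\{l\}) \to 0$ because $1/(g + |T_l|)$ and $1/g$ have the same order of summability, whence the above product tends to $0$ for every $l \geq 1$; only the $l = 0$ term survives, and by Theorem~8.7 of \cite{GK} it reproduces $r^{-e(w)} \mu_P(\underline{w}) = \mu_P(\underline{w})/r^{e(w)}$. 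The main technical obstacle will be the uniform control of the product $\prod_j g(\underline{v_n}\{l\}, j)/(g(\underline{v_n}\{l\}, j) + |T_l|)$ in both regimes and the careful bookkeeping separating contributions from the stable tail $T_l$ and from the drifting part $\underline{v_n}\{l\}$.
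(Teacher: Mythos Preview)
Your opening reduction coincides with the paper's: both substitute the closed form for $d_r(w,v_n)$, cancel $r^{d(v_n)}$, and arrive at $\dfrac{d_1(\varepsilon,\underline{w})}{r^{e(w)}}$ times a finite combination of ratios $d_1(\underline{w}\{l\},\underline{v_n}\{l\})/d_1(\varepsilon,\underline{v_n})$ with eventually constant coefficients.

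From that point the two arguments diverge. The paper expands each numerator $d_1(\underline{w}\{l\},\underline{v_n}\{l\})$ by the explicit formula of Theorem~\ref{evtuh} (the $f$-function identity), rewrites the resulting $g$-products via $g(\underline{v_n}\{l\},j)=g(\underline{v_n},k+j)-|T_l|$, and invokes Proposition~\ref{betai} directly on the \emph{original} sequence $v_n$ with starting index $a=k+1$ and shift $i\mapsto i+|T_l|$; this produces an explicit closed-form limit in part~1 and zero for every positive shift in part~2. Your route instead avoids Theorem~\ref{evtuh} entirely and factors through the known $\mathbb{YF}^1$ Martin kernel (Theorem~8.7 of \cite{GK}) along the truncated sequence $\underline{v_n}\{l\}$. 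That is a genuinely different, more conceptual approach.

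There is, however, a real slip in how you call Proposition~\ref{betai}. The product you need is $\prod_j \dfrac{g(\underline{v_n}\{l\},j)}{g(\underline{v_n}\{l\},j)+|T_l|}$, which is \emph{not} of the shape $\prod_j \dfrac{g-i}{g}$ in the variables $g(\underline{v_n}\{l\},j)$; Proposition~\ref{betai} applied to the auxiliary sequence with $i=|T_l|$ therefore does not give it. The correct move is the one the paper makes implicitly: use your own $g$-identity to rewrite each factor as $\dfrac{g(\underline{v_n},k+j)-|T_l|}{g(\underline{v_n},k+j)}$ and apply Proposition~\ref{betai} to the original $v_n$ with $a=k+1$. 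A related point: to quote Theorem~8.7 of \cite{GK} for the Martin-kernel factor you must first check $\pi(\underline{v_n}\{l\})\to\beta\,\pi(\underline{v}\{l\})$, and this is more than a one-line appeal to Proposition~8.6 --- one has to express $\pi(\underline{v_n}\{l\})$, via the $g$-identity, as a fixed factor times $\pi_{|T_l|+1}(\underline{v_n})/\pi_{|T_l|}(\underline{v_n})$ and then use Proposition~8.6 twice. Once both points are fixed your argument goes through; it buys freedom from the $f$-machinery at the cost of these extra verifications, while the paper's route yields explicit formulae for the limits in part~1 at the cost of invoking Theorem~\ref{evtuh}.
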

\begin{proof}

        $$\lim_{n\to \infty} d_r(\varepsilon,w)\frac{d_r(w,v_n)}{d_r(\varepsilon,v_n)}=\lim_{n\to \infty} r^{d(w)}\cdot d_1(\varepsilon,\underline{w})\cdot \frac{ r^{d(v_n)-\#w}}{ r^{d(v_n)}}\cdot$$
        $$\cdot\left(\frac{\displaystyle   d_1\left(\underline{w},\underline{v_n}\right)+\sum_{l=1}^{e(w,v_n)} d_1\left(\underline{w}\{l\},\underline{v_n}\{l\}\right)\cdot \left( r^l- r^{l-1}\right)-\mathbbm{1}_{w,v_n}\cdot {d_1\left(\underline{w}\{e(w,v)+1\},\underline{v_n}\{e(w,v)+1\}\right)}\cdot r^{e(w,v_n)}}{ d_1\left(\varepsilon,\underline{v_n}\right)}\right)=$$

\footnotesize
        
        $$=\frac{d_1(\varepsilon,\underline{w})}{r^{e(w)}}\cdot\lim_{n\to \infty}\left(\frac{\displaystyle   d_1\left(\underline{w},\underline{v_n}\right)+\sum_{l=1}^{e(w,v)} d_1\left(\underline{w}\{l\},\underline{v_n}\{l\}\right)\cdot \left( r^l- r^{l-1}\right)-\mathbbm{1}_{w,v}\cdot {d_1\left(\underline{w}\{e(w,v)+1\},\underline{v_n}\{e(w,v)+1\}\right)}\cdot r^{e(w,v)}}{ d_1\left(\varepsilon,\underline{v_n}\right)}\right).$$
    
\normalsize
    
Теперь будем доказывать теорему по пунктам, пользуясь Утверждением \ref{betai}.

\begin{enumerate}
    \item \begin{itemize}

        \item
        
        $$\lim_{n\to \infty} \frac{d_1\left(\underline{w},\underline{v_n}\right)}{d_1\left(\varepsilon,\underline{v_n}\right)}=\lim_{n\to \infty}\frac{\displaystyle\sum_{i=0}^{|w|}\left( {f\left(\underline{w},i,h\left(\underline{w},\underline{v_n}\right)\right)}\prod_{j=1}^{d(v_n)}\left(g\left(v_n,j\right)-i\right)\right)}{\displaystyle \prod_{j=1}^{d(v_n)}g\left(v_n,j\right)}=$$
        $$= \sum_{i=0}^{|w|}\left( f\left(\underline{w},i,h(\underline{w},\underline{v})\right)\cdot \beta^i\cdot \prod_{j=1}^{d(v)}\frac{ \displaystyle g\left(v,j\right)-i}{\displaystyle g\left(v,j\right)}\right);$$
        
        \item $\forall l \in\overline{1,e(w,v)+1}$
        $$\lim_{n\to \infty} \frac{d_1\left(\underline{w}\{l\},\underline{v_n}\{l\}\right)}{d_1\left(\varepsilon,\underline{v_n}\right)}=$$
        $$=\lim_{n\to \infty}\frac{\displaystyle\sum_{i=0}^{|w|}\left( {f\left(\underline{w}\{l\},i,h\left(\underline{w}\{l\},\underline{v_n}\{l\}\right)\right)}\prod_{j=d( v_n \langle l \rangle)+1 }^{d(v_n)}\left(g\left(v_n,j\right)-|v_n \langle l \rangle| - i\right)\right)}{\displaystyle \prod_{j=1}^{d(v_n)}g\left(v_n,j\right)}=$$
        $$= \sum_{i=0}^{|w|}\left( f\left(\underline{w}\{l\},i,h(\underline{w}\{l\},\underline{v}\{l\})\right)\cdot \beta^{| v \langle l \rangle|+i}\cdot \prod_{j=1}^{d( v \langle l \rangle)}\frac{ \displaystyle 1}{\displaystyle g\left(v,j\right)}  \cdot \prod_{j= d( v \langle l \rangle) + 1}^{d(v)}\frac{ \displaystyle g\left(v,j\right)-|v\langle l \rangle|-i }{\displaystyle g\left(v,j\right)}\right).$$

    \end{itemize}

\item 
\begin{itemize}

        \item
        
        $$\lim_{n\to \infty} \frac{d_1\left(\underline{w},\underline{v_n}\right)}{d_1\left(\varepsilon,\underline{v_n}\right)}=\lim_{n\to \infty}\frac{\displaystyle\sum_{i=0}^{|w|}\left( {f\left(\underline{w},i,h(\underline{w},\underline{v_n})\right)}\prod_{j=1}^{d(v_n)}\left(g\left(v_n,j\right)-i\right)\right)}{\displaystyle \prod_{j=1}^{d(v_n)}g\left(v_n,j\right)}=  f\left(\underline{w},0,h(\underline{w},\underline{v})\right)=\frac{d_1(\varepsilon,\underline{w})}{|{w}|!};$$
        
        \item $\forall l \in\overline{1,e(w,v)+1}$
        $$\lim_{n\to \infty} \frac{d_1\left(\underline{w}\{l\},\underline{v_n}\{l\}\right)}{d_1\left(\varepsilon,\underline{v_n}\right)}=$$
        $$=\lim_{n\to \infty}\frac{\displaystyle\sum_{i=0}^{|w|}\left( {f\left(\underline{w}\{l\},i,h\left(\underline{w}\{l\},\underline{v_n}\{l\}\right)\right)}\prod_{j=d( v_n \langle l \rangle)+1 }^{d(v_n)}\left(g\left(v_n,j\right)-|v_n \langle l \rangle| - i\right)\right)}{\displaystyle \prod_{j=1}^{d(v_n)}g\left(v_n,j\right)}= 0.$$ 
    \end{itemize}

\end{enumerate}
        
\end{proof}

\begin{Oboz}
    Обозначим меры из данной Теоремы за $\mu_{r,v,\beta}(w)$ и за $\mu_{r,P}(w).$
\end{Oboz}

\begin{Zam}
По вышеизложенным причинам граница Мартина состоит только из таких мер. Различность этих мер будет доказана позже.
\end{Zam}

\renewcommand{\labelenumi}{\arabic{enumi}$^\circ$}

\begin{Col}[из Леммы \ref{suuum}] \label{suum}
Пусть $r\in\mathbb{N}_{n \geqslant  2},$ $v\in\mathbb{YF}_\infty^{r,+},$ $\beta\in(0,1],$ $u\in\mathbb{YF}.$ Тогда 
$$\sum_{w\in S_r(u)} \mu_{r,v,\beta}(w) = \mu_{\underline{v},\beta}(u). $$

\end{Col}

\begin{proof}
Посчитаем, помня про Замечание \ref{piravno}:
$$\sum_{w\in S_r(u)} \mu_{r,v,\beta}(w) =\sum_{w\in S_r(u)} \lim_{n\to \infty} d_r(\varepsilon,w)\frac{d_r(w,v_n)}{d_r(\varepsilon,v_n)}= \lim_{n\to \infty} d_1(\varepsilon,u)\cdot r^{d(u)}\cdot \frac{r^{d(v_n)-d(u)}\cdot d_1\left(u,\underline{v_n}\right)}{r^{d(v_n)}\cdot d_1\left(\varepsilon,\underline{v_n}\right)}= \mu_{\underline{v},\beta}(u) .$$

\end{proof}

\newpage

\section{Эргодичность}
\begin{theorem}
    Пусть $r\in\mathbb{N}_{\geqslant  2}.$ Тогда мера $\mu_{r,P}$ эргодична. 
\end{theorem}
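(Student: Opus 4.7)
План доказательства состоит в сведении эргодичности $\mu_{r,P}$ к уже установленной эргодичности классической меры Планшереля $\mu_P$ на графе $\mathbb{YF}$ из \cite{KerGned}. Я буду пользоваться стандартным критерием Вершика (см.\ \cite{Ver}): центральная вероятностная мера $\mu$ на пространстве бесконечных путей эргодична тогда и только тогда, когда для $\mu$-почти всех путей $\{w_n\}$ предел $\mu_{\{w_n\}}(w) = \lim_{n\to\infty} d_r(\varepsilon,w)d_r(w,w_n)/d_r(\varepsilon,w_n)$ существует для каждого $w$ и совпадает с $\mu(w)$. Согласно второму пункту предыдущей Теоремы, это равенство выполнено, как только $\pi(w_n)\to 0$; поэтому достаточно показать, что $\mu_{r,P}$-почти всюду $\pi(w_n)\to 0$.

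Следующий шаг --- свести задачу к случаю $r=1$ через отображение забывания индексов $s\colon \mathbb{YF}^r\to\mathbb{YF}$. По Замечанию \ref{piravno}, $\pi(w_n) = \pi(\underline{w_n})$. Ключевой пункт --- проверить, что продолжение $s$ на пути переводит меру $\mu_{r,P}$ в $\mu_P$. Для этого я проверю совпадение на цилиндрах: используя формулу $\mu_{r,P}(w) = \mu_P(\underline{w})/r^{e(w)}$ из предыдущей Теоремы и равенство $|S_r(u)| = r^{e(u)}$, получаем
$$\sum_{w\in S_r(u)} \mu_{r,P}(w) \;=\; r^{e(u)}\cdot\frac{\mu_P(u)}{r^{e(u)}} \;=\; \mu_P(u),$$
что совпадает с $\mu_P$-мерой цилиндра над $u$; стандартное продолжение с алгебры цилиндров даёт $s_*\mu_{r,P} = \mu_P$. Тем самым, если $\{w_n\}$ --- $\mu_{r,P}$-случайный путь, то $\{\underline{w_n}\}$ --- $\mu_P$-случайный путь.

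Наконец, я сошлюсь на случай $r=1$: по эргодичности $\mu_P$ из \cite{KerGned} и обратной части критерия Вершика, $\mu_P$-почти всюду $\mu_{\{u_n\}}=\mu_P$, а это по второму пункту Теоремы 8.7 из \cite{GK} эквивалентно $\pi(u_n)\to 0$. Применяя это утверждение к последовательности $\{\underline{w_n}\}$, получаем $\mu_{r,P}$-почти всюду $\pi(w_n) = \pi(\underline{w_n}) \to 0$, что по сказанному выше и даёт эргодичность $\mu_{r,P}$.

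Главное, что требует внимания в этом плане, --- это аккуратное применение критерия Вершика в обе стороны (классический факт из \cite{Ver}) и корректное продолжение отображения $s$ с вершин графа на пространство путей и на соответствующую $\sigma$-алгебру; никаких новых сложных вычислений план не требует, поскольку вся комбинаторика уже проделана в предыдущих разделах.
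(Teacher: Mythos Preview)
Your approach is correct in outline and genuinely different from the paper's. The paper simply says that the argument of \cite{KerGned} carries over verbatim to $\mathbb{YF}^r$, i.e.\ one repeats the Gnedin--Kerov proof in the $r$-coloured setting. You instead transport the problem back to $\mathbb{YF}$ via the forgetting map $s$ and invoke the already-proved ergodicity of $\mu_P$. Your route is more economical (it reuses the $r=1$ result rather than reproving it), while the paper's route avoids the measure-theoretic bookkeeping around the pushforward.

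Two points deserve tightening. First, in checking $s_*\mu_{r,P}=\mu_P$ you verify only the vertex marginals $\sum_{w\in S_r(u)}\mu_{r,P}(w)=\mu_P(u)$; these do not generate the path $\sigma$-algebra, so this alone does not identify the pushforward. What is missing is the observation (implicit in the proof of Утверждение~1) that for each fixed $w_n$ the map $s$ is $r^{d(w_n)}$-to-$1$ on finite paths to $w_n$, hence sends the uniform measure on paths to $w_n$ to the uniform measure on paths to $\underline{w_n}$; this makes $s_*\mu_{r,P}$ central, after which your vertex check suffices. Second, the implication ``$\mu_{\{u_n\}}=\mu_P\Rightarrow\pi(u_n)\to 0$'' is not the content of item~2 of Theorem~8.7 in \cite{GK} but rather of item~3 (distinctness of the boundary measures), together with a routine subsequence/compactness argument since symbolic convergence of $\{u_n\}$ is not automatic along every path. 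Both fixes are standard and do not affect the overall strategy.
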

\begin{proof}
    Доказательство вполне аналогично доказательству эргодичности меры Планшереля для графа Юнга -- Фибоначчи в работе \cite{KerGned}.
\end{proof}

\begin{Oboz}
Пусть $r\in\mathbb{N},$ $v\in\mathbb{YF}_\infty^{r,+}$, $\beta\in(0,1]$, $m\in\mathbb{N}_0$, $\varepsilon\in\mathbb{R}_{>0}$. Тогда определим множество вершин:
    $$\overline{R}(r,v,\beta,m,\varepsilon):=\left\{w\in\mathbb{YF}_m^r:\quad \pi(w)\notin(\pi(v)(\beta-\varepsilon),\pi(v)(\beta+\varepsilon)) \right\}.$$
\end{Oboz}

\renewcommand{\labelenumi}{\arabic{enumi}$)$}

\begin{theorem}[Следствие 10\cite{Evtuh3}] \label{t3}
Пусть  $v\in \mathbb{YF}_\infty^{1,+}$, $\beta \in(0,1]$, $ \varepsilon\in\mathbb{R}_{>0}$. Тогда
    $$\lim_{m \to \infty}{\sum_{w\in \overline{R}(1,v,\beta,m,\varepsilon)}\mu_{w,\beta}(v)}=0;$$
\end{theorem}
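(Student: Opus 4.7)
The plan is to interpret the claimed limit probabilistically. Under the central measure $\mu_{v,\beta}$, let $W_m$ denote the random vertex at level $m$; the sum $\sum_{w \in \overline{R}(1,v,\beta,m,\varepsilon)} \mu_{v,\beta}(w)$ is precisely the probability that $\pi(W_m)$ lies outside the interval $\bigl(\pi(v)(\beta-\varepsilon),\, \pi(v)(\beta+\varepsilon)\bigr)$. (I read the expression $\mu_{w,\beta}(v)$ in the statement as a typo for $\mu_{v,\beta}(w)$, consistent with the notation introduced earlier.) So the claim reduces to the concentration statement $\pi(W_m) \to \beta\, \pi(v)$ in $\mu_{v,\beta}$-probability as $m \to \infty$.

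The natural first step is to compute $\mathbb{E}_{\mu_{v,\beta}}[\pi(W_m)]$. Inserting the defining limit $\mu_{v,\beta}(w) = \lim_n d_1(\varepsilon,w)\, d_1(w,v_n)/d_1(\varepsilon,v_n)$, using Theorem \ref{evtuh} to expand $d_1(w, v_n)$, and applying Proposition \ref{betai} with $i = 1$, one obtains that the first moment converges to $\beta\, \pi(v)$; the exponent $1$ in $\beta^1$ comes from the single factor $(g-1)/g$ per two-position in the definition of $\pi$.

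The crux of the argument is bounding the variance $\mathrm{Var}_{\mu_{v,\beta}}(\pi(W_m))$ and showing it vanishes as $m \to \infty$. This is where the main obstacle lies: $\pi(w)^2$ is not of the form $\pi_i(w)$ for any $i$, so Proposition \ref{KerovGoodman} does not apply directly. My approach would be to expand $\pi(w)^2 = \prod_j \bigl(1 - 1/g(w,j)\bigr)^2$ into a finite sum over subsets $S\subseteq\{1,\ldots,d(w)\}$, each summand of which can be recognized as a combination of $\pi_1$- and $\pi_2$-type factors together with controlled tail contributions involving $1/g(w,j)^2$; Proposition \ref{betai} applied term-by-term then identifies the limit of the second moment as $\beta^2 \pi(v)^2$, matching the square of the first moment. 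Chebyshev's inequality finishes the argument.

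An alternative, and perhaps cleaner, route is to exploit the Markov structure of $\mu_{v,\beta}$ directly: along a random path under $\mu_{v,\beta}$, one analyzes the step-by-step evolution of $\pi(W_m)$ using the co-transition probabilities inherited from the defining ratios $d_1(w,v_n)/d_1(\varepsilon,v_n)$. A careful bookkeeping of how $g(W_m,\,\cdot\,)$ changes when a symbol is appended should yield a concentration estimate without passing through the second moment. I would attempt this martingale-style route first if the combinatorial expansion of $\pi(w)^2$ proves too cumbersome to match cleanly with Proposition \ref{KerovGoodman}.
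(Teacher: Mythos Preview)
This statement is not proved in the present paper at all: Theorem~\ref{t3} is quoted from \cite{Evtuh3} (Corollary~10 there) and immediately used as a black box to obtain the $r$-version in the next theorem. There is thus no in-paper argument to compare your proposal against. Your reading of $\mu_{w,\beta}(v)$ as a typo for $\mu_{v,\beta}(w)$ is correct and is exactly how the result is invoked via Corollary~\ref{suum} in the subsequent proof.

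The probabilistic reformulation is fine, but the moment computation you sketch has a real gap. Theorem~\ref{evtuh} and Proposition~\ref{betai} both concern the limit $n\to\infty$ along the approximating sequence $v_n$ used to \emph{define} $\mu_{v,\beta}$: together they let you write $\mu_{v,\beta}(w)$ explicitly for each fixed $w$, but they say nothing about how to evaluate the sum $\sum_{|w|=m}\pi(w)\,\mu_{v,\beta}(w)$ over the level set. When you write ``applying Proposition~\ref{betai} with $i=1$'', you are conflating the deterministic product $\prod_j(g(v_n,j)-1)/g(v_n,j)$ (essentially $\pi(v_n)$, a product over the two-positions of $v_n$) with the random variable $\pi(W_m)$ (a product over the two-positions of $w$); no tool in this paper relates the two. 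The same confusion recurs in the variance step: expanding $\pi(w)^2$ over subsets of $\{1,\dots,d(w)\}$ produces sums whose length depends on $w$, and you give no mechanism for summing them over $w\in\mathbb{YF}_m$. Your alternative Markov route is a more promising direction in spirit, but as written it contains no identified (sub/super)martingale and no increment bound --- it is a hope rather than a plan.
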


\begin{theorem}
Пусть $r\in\mathbb{N}_{\geqslant  2},$ $v\in \mathbb{YF}_\infty^{r,+}$, $\beta \in(0,1]$, $ \varepsilon\in\mathbb{R}_{>0}$. Тогда
    $$\lim_{m \to \infty}{\sum_{w\in \overline{R}(r,v,\beta,m,\varepsilon)}\mu_{r,w,\beta}(v)}=0;$$
\end{theorem}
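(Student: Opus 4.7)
The plan is to reduce the claim to its $r=1$ counterpart, Theorem \ref{t3}, by summing out the indices attached to the ones. The key ingredient for this reduction is Corollary \ref{suum}, which asserts that for any fixed shape $u \in \mathbb{YF}$ the total mass of $\mu_{r,v,\beta}$ on the fibre $S_r(u) = \{w \in \mathbb{YF}^r : \underline{w} = u\}$ equals exactly $\mu_{\underline{v},\beta}(u)$.

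First I would observe, using Remark \ref{piravno}, that $\pi(w) = \pi(\underline{w})$ for every $w \in \mathbb{YF}^r$, and likewise $\pi(v) = \pi(\underline{v})$. Consequently a vertex $w \in \mathbb{YF}_m^r$ belongs to $\overline{R}(r,v,\beta,m,\varepsilon)$ if and only if $\underline{w} \in \overline{R}(1,\underline{v},\beta,m,\varepsilon)$, and the fibres $S_r(u)$ with $u \in \mathbb{YF}_m$ partition $\mathbb{YF}_m^r$. Moreover, the hypothesis $v \in \mathbb{YF}_\infty^{r,+}$ forces $\underline{v} \in \mathbb{YF}_\infty^{1,+}$, so Theorem \ref{t3} is applicable to $\underline{v}$.

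Next I would partition the sum according to the projected shape $u = \underline{w}$ and apply Corollary \ref{suum} to each inner sum:
$$\sum_{w \in \overline{R}(r,v,\beta,m,\varepsilon)} \mu_{r,v,\beta}(w) \;=\; \sum_{u \in \overline{R}(1,\underline{v},\beta,m,\varepsilon)} \; \sum_{w \in S_r(u)} \mu_{r,v,\beta}(w) \;=\; \sum_{u \in \overline{R}(1,\underline{v},\beta,m,\varepsilon)} \mu_{\underline{v},\beta}(u).$$
By Theorem \ref{t3} applied to $\underline{v}$, the right-hand side tends to $0$ as $m \to \infty$, which is exactly what is required.

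All the analytic content has already been packaged into Theorem \ref{t3} (the one-dimensional concentration estimate from \cite{Evtuh3}) and into the combinatorial summation of Lemma \ref{suuum} (used via Corollary \ref{suum}), so no new estimate is needed; the only thing to verify is the compatibility of the definitions of $\overline{R}$ and $\mathbb{YF}_\infty^{r,+}$ under the index-forgetting map $s$, both of which are immediate from Remark \ref{piravno}. Accordingly I do not expect a genuine obstacle in carrying out this plan beyond this bookkeeping reindexing.
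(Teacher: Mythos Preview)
Your proof is correct and coincides with the paper's: decompose $\overline{R}(r,v,\beta,m,\varepsilon)$ into the fibres $S_r(u)$ over $u\in\overline{R}(1,\underline v,\beta,m,\varepsilon)$ via Remark~\ref{piravno}, apply Corollary~\ref{suum} inside each fibre, and conclude by Theorem~\ref{t3}. Your reading of the summand as $\mu_{r,v,\beta}(w)$ rather than the printed $\mu_{r,w,\beta}(v)$ is the intended one, as the paper's own invocation of Corollary~\ref{suum} confirms.
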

\begin{proof}
Посчитаем, помня про Следствие \ref{suum}:
 $$\lim_{m \to \infty}{\sum_{w\in \overline{R}(r,v,\beta,m,\varepsilon)}\mu_{r,w,\beta}(v)}=\lim_{m \to \infty}{\sum_{u\in \overline{R}\left(1,\underline{v},\beta,m,\varepsilon\right)}\sum_{w\in S_r(u)}\mu_{r,w,\beta}(v)}=\lim_{m \to \infty} {\sum_{u\in \overline{R}\left(1,\underline{v},\beta,m,\varepsilon\right)}} \mu_{\underline{v},\beta}(u)=0.$$
\end{proof}

\begin{Oboz}
Пусть $r\in\mathbb{N},$ $v\in\mathbb{YF}_\infty^{r,+}$, $m,k\in\mathbb{N}_0,$ $i\in\mathbb{N}.$ Тогда
\begin{itemize}
    \item $$ \overline{Q}(r,v,m,k):=\left\{w\in\mathbb{YF}_m^r:\quad {e(w,v)}< k\right\}.$$
    \item При $r\in\mathbb{N}_{\geqslant  2}$
    $$\widetilde{Q}(r,v,m,k):=\left\{w\in\mathbb{YF}_m^r:\quad {e\left(\underline{w},\underline{v}\right)} < k\right\}.$$
    \item При $r\in\mathbb{N}_{\geqslant  2}$
    $$\widehat{Q}(r,v,m,i):=\left\{w\in\mathbb{YF}_m^r:\quad \mathbbm{1}_{w,v}=1, \; e(w,v)=i-1 \right\}.$$
\end{itemize}
\end{Oboz}

\begin{Prop}
Пусть $r\in\mathbb{N}_{\geqslant  2},$ $u,v,w\in\mathbb{YF}^r,$ $i,j\in\overline{1,r}:$ $i\ne j.$ Тогда
    $$d_r(w1_iu,v1_ju)\leqslant  d_r(w1_iu,v2u).$$
\end{Prop}

\begin{proof}

Рассмотрим путь ``вниз'' из $v1_ju$ в $w1_iu$. Ясно, что он имеет вид
$$ v 1_j u \to \ldots \to 2^d 1_ju \to 2^d u \to \ldots \to  w1_iu $$
при каком-то $d\in \overline{d(v)}.$

Сопоставим ему путь ``вниз'' из $v2u$ в $w1_iu$ вида
$$ v 2 u \to \ldots \to 2^d 2 u \to 2^d 1_j u \to 2^d u \to  \ldots \to  w1_iu $$
с теми же шагами на месте многоточий.

Ясно, что все эти пути различны. Неравенство следует.

\end{proof}

\begin{Zam}\label{betta}
Пусть $r\in\mathbb{N},$ $u,v,w\in\mathbb{YF}^r,$ $j\in\overline{1,r}.$ Тогда
    $$\frac{\pi(v2u)}{\pi(v1_ju)}\geqslant  \frac{1}{2}.$$ 
\end{Zam}

    

\begin{theorem}[равносильно Следствию 9\cite{Evtuh3}, так как ограниченность длины общего суффикса равносильно ограниченности числа единиц в общем суффиксе] \label{t2}
Пусть $v\in \mathbb{YF}_\infty^{1,+}$, $\beta\in(0,1]$, $k \in\mathbb{N}_0$. Тогда
$$ \lim_{m \to \infty}{\sum_{w\in \overline{Q}(1,v,m,k)}\mu_{v,\beta}(w)=0};$$
    
\end{theorem}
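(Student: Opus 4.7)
The plan is to reduce the statement directly to Corollary 9 of \cite{Evtuh3}, which makes the analogous assertion with a bound on the length $h(w,v)$ of the common suffix rather than on the number $e(w,v)$ of ones in it. The equivalence between the two formulations, indicated in the theorem header, is what I will verify.

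First I would check that every $v\in \mathbb{YF}_\infty^{1,+}$ contains infinitely many ones. Indeed, suppose $v$ has only finitely many ones. Since $v$ is an infinite-word vertex, it must then have infinitely many twos, so $d(v)=\infty$ and the block parameters $\beta_k$ vanish for all $k$ beyond some $K$. For $j>K$ we have $g(v,j)=(\sum_{k<K}\beta_k)+2j-1\sim 2j$, so $\sum_j 1/g(v,j)$ diverges, hence $\pi(v)=0$, contradicting $v\in \mathbb{YF}_\infty^{1,+}$. Consequently I may define $H_k(v)\in\mathbb{N}$ as the length of the shortest suffix of $v$ containing exactly $k$ ones; this is a finite constant that does not depend on $m$.

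Next I would establish the inclusion
$$\overline{Q}(1,v,m,k)\subseteq \{w\in \mathbb{YF}_m : h(w,v)<H_k(v)\}.$$
By the very definition of $H_k(v)$, a common suffix of length at least $H_k(v)$ already contains $k$ ones, so $h(w,v)\geq H_k(v)$ forces $e(w,v)\geq k$; the contrapositive is the claimed inclusion. Because $\mu_{v,\beta}$ is nonnegative, the sum over the smaller set is bounded above by the sum over the larger one, and applying Corollary 9 of \cite{Evtuh3} with the constant $H_k(v)$ in place of the suffix-length threshold gives that the latter sum tends to $0$ as $m\to\infty$.

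The only point of real substance is the observation that $\pi(v)>0$ forces $v$ to have infinitely many ones, which is what guarantees $H_k(v)<\infty$ and hence makes the reduction go through. Everything else is a one-line inclusion and a monotonicity argument using the nonnegativity of $\mu_{v,\beta}$.
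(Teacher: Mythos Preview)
Your reduction is exactly the one the paper intends: it does not prove this theorem at all but simply records it as a reformulation of Corollary~9 of \cite{Evtuh3}, relying on the parenthetical remark that a bound on $e(w,v)$ is equivalent to a bound on $h(w,v)$. You have supplied the only nontrivial ingredient behind that remark, namely that $\pi(v)>0$ forces $v$ to contain infinitely many ones so that $H_k(v)$ is finite; the inclusion and monotonicity step are then immediate, just as you say.
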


\begin{theorem}
Пусть $r\in\mathbb{N}_{\geqslant  2},$ $v\in \mathbb{YF}_\infty^{r,+}$, $\beta\in(0,1]$, $k \in\mathbb{N}_0$. Тогда
    $$ \lim_{m \to \infty}{\sum_{w\in \overline{Q}(r,v,m,k)}\mu_{r,v,\beta}(w)=0};$$
\end{theorem}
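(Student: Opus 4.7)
Сначала разложу $\overline Q(r, v, m, k)$ на части, к которым применима Теорема \ref{t2} ($r = 1$). Поскольку $e(w, v) \leq e(\underline w, \underline v)$ всегда, причём равенство эквивалентно $\mathbbm{1}_{w, v} = 0$, любая вершина $w$ с $e(w, v) < k$ либо удовлетворяет $e(\underline w, \underline v) < k$ (тогда $w \in \widetilde Q(r, v, m, k)$), либо имеет $\mathbbm{1}_{w, v} = 1$ и $e(w, v) \in \{0, \ldots, k - 1\}$ (тогда $w \in \widehat Q(r, v, m, e(w, v) + 1)$). Получаем вложение
$$\overline Q(r, v, m, k) \subseteq \widetilde Q(r, v, m, k) \,\cup\, \bigcup_{i=1}^{k} \widehat Q(r, v, m, i),$$
и достаточно показать, что каждая часть даёт нулевой вклад в пределе.

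Для $\widetilde Q$ стандартно: разбивая на слои $S_r(u)$ при $u \in \overline Q(1, \underline v, m, k)$ и применяя Следствие \ref{suum}, получаем
$$\sum_{w \in \widetilde Q(r, v, m, k)} \mu_{r, v, \beta}(w) = \sum_{u \in \overline Q(1, \underline v, m, k)} \mu_{\underline v, \beta}(u),$$
а по Замечанию \ref{piravno} $\underline v \in \mathbb{YF}_\infty^{1, +}$, так что правая часть стремится к $0$ при $m \to \infty$ по Теореме \ref{t2}, применённой к $\underline v$.

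Для каждого $\widehat Q(r, v, m, i)$ с $i \leq k$ введу модифицированную вершину $\tilde v := v \Lbag i \Rbag$, получаемую заменой $i$-й справа единицы в $v$ на двойку. По Замечанию \ref{betta} $\pi(\tilde v) \geq \pi(v)/2 > 0$, так что $\tilde v \in \mathbb{YF}_\infty^{r, +}$, и можно выбрать $\tilde \beta \in (0, 1]$ и последовательность $\tilde v_n \to \tilde v$ с $\pi(\tilde v_n)/\pi(\tilde v) \to \tilde \beta$. Для $w \in \widehat Q(r, v, m, i)$ единица в $w$ на $i$-й справа единичной позиции встречается с двойкой в $\tilde v$, поэтому $\mathbbm{1}_{w, \tilde v} = 0$ и $e(\underline w, \underline{\tilde v}) = i - 1 < k$, то есть $\underline w \in \overline Q(1, \underline{\tilde v}, m, k)$. Утверждение непосредственно перед теоремой, $d_r(w' 1_{j_w} u, v' 1_{j_v} u) \leq d_r(w' 1_{j_w} u, v' 2 u)$ при $j_w \neq j_v$, даёт поточечную оценку $d_r(w, v_n) \leq d_r(w, \tilde v_n)$. В совокупности с явной формулой для $\mu_{r, v, \beta}(w)$, выведенной выше (в которой член с $\mathbbm{1}_{w, v} = 1$ входит со знаком минус), и нижней оценкой на отношение $\pi$ из Замечания \ref{betta}, это приводит к оценке
$$\sum_{w \in \widehat Q(r, v, m, i)} \mu_{r, v, \beta}(w) \leq C_i \sum_{u \in \overline Q(1, \underline{\tilde v}, m, k)} \mu_{\underline{\tilde v}, \tilde \beta}(u),$$
правая часть которой стремится к $0$ при $m \to \infty$ по Теореме \ref{t2}, применённой к $\underline{\tilde v}$.

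Главная трудность --- доказать указанную оценку в третьем абзаце. Поточечное отношение $d_r(\varepsilon, \tilde v_n)/d_r(\varepsilon, v_n)$ записывается через произведение факторов $1 + 1/g(v_n, j)$ и расходится при $n \to \infty$, так что наивное сравнение $\mu_{r, v, \beta}(w) \leq C_i \mu_{r, \tilde v, \tilde \beta}(w)$ не работает. Компенсация должна происходить за счёт отрицательного слагаемого в формуле для $\mu_{r, v, \beta}(w)$: переписав скобочное выражение в виде телескопической суммы $\sum_{l=0}^{i-1} r^l (M_l(u) - M_{l+1}(u))$ с $M_l(u) = \lim_n d_1(u\{l\}, \underline{v_n}\{l\})/d_1(\varepsilon, \underline{v_n})$ и сравнив с разложением для $\mu_{r, \tilde v, \tilde \beta}(w)$ (в котором отрицательная коррекция отсутствует, поскольку $\mathbbm{1}_{w, \tilde v} = 0$), с использованием асимптотик из Утверждения \ref{betai}, можно извлечь конечную константу $C_i$. Аккуратное проведение этой компенсации составляет техническое ядро аргумента.
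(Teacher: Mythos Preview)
Your decomposition $\overline Q(r,v,m,k)\subseteq\widetilde Q(r,v,m,k)\cup\bigcup_{i=1}^k\widehat Q(r,v,m,i)$ and the treatment of $\widetilde Q$ via Следствие~\ref{suum} and Теорема~\ref{t2} coincide with the paper's argument. For $\widehat Q(r,v,m,i)$ the paper also passes to $v\Lbag i\Rbag$ and uses the Утверждение $d_r(w,v_n)\le d_r(w,v_n\Lbag i\Rbag)$, so your overall plan is the right one. Two points, however.

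First, you should take the \emph{specific} sequence $\tilde v_n:=v_n\Lbag i\Rbag$, not an arbitrary sequence converging to $v\Lbag i\Rbag$; otherwise the pointwise inequality $d_r(w,v_n)\le d_r(w,\tilde v_n)$ has no reason to hold. With this choice one checks (using Замечание~\ref{betta} for positivity) that $\pi(v_n\Lbag i\Rbag)\to\beta_i\,\pi(v\Lbag i\Rbag)$ for some $\beta_i\in(0,1]$, so $\mu_{r,v\Lbag i\Rbag,\beta_i}$ is the relevant limiting measure.

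Second, and this is the actual gap, your claim that $d_r(\varepsilon,v_n\Lbag i\Rbag)/d_r(\varepsilon,v_n)$ diverges is false. Writing it out,
\[
\frac{d_r(\varepsilon,v_n\Lbag i\Rbag)}{d_r(\varepsilon,v_n)}
= r\cdot|v_n\langle i\rangle|\cdot\!\!\prod_{j>d(v_n\langle i\rangle)}^{d(v_n)}\!\frac{g(v_n,j)+1}{g(v_n,j)}
\ \le\ r\cdot|v\langle k\rangle|\cdot\prod_{j=1}^{d(v_n)}\frac{g(v_n,j)+1}{g(v_n,j)}.
\]
Since $(1+1/g)\le(1-1/g)^{-1}$ for $g>1$ and at most the single index $j=1$ can have $g(v_n,j)=1$, the last product is at most $2/\pi(v_n)\to 2/(\beta\,\pi(v))<\infty$. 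Thus the ``naive'' comparison you dismissed actually works: for $w\in\widehat Q(r,v,m,i)$ one gets $\mu_{r,v,\beta}(w)\le C\,\mu_{r,v\Lbag i\Rbag,\beta_i}(w)$ with a finite constant $C$ independent of $m$ and $w$. Because such $w$ satisfy $e(\underline w,\underline{v\Lbag i\Rbag})=i-1<k$, summing and applying Следствие~\ref{suum} and Теорема~\ref{t2} (now to $\underline{v\Lbag i\Rbag}$) finishes the proof directly. The telescoping ``компенсация'' you outline is unnecessary.
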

\begin{proof}
Пусть $\{v_n\}_{n=1}^\infty\in\left(\mathbb{YF}^r\right)^\infty:$     $$v_n\xrightarrow{n\to\infty}v, \quad \pi(v_n)\xrightarrow{n\to\infty}\beta \cdot \pi(v).$$
Тогда
  $$ \varlimsup_{m \to \infty}{\sum_{w\in \overline{Q}(r,v,m,k)}\mu_{r,v,\beta}(w)}\leqslant  \varlimsup_{m \to \infty}\sum_{w\in \widetilde{Q}(r,v,m,k)}\mu_{r,v,\beta}(w)+\sum_{i=1}^{k}\sum_{w\in \widehat{Q}(r,v,m,i)}\mu_{r,v,\beta}(w)=$$
    $$ = \varlimsup_{m \to \infty} \left( \sum_{u\in \overline{Q}(1,v,m,k)}\sum_{w\in S_r(u)}\mu_{r,v,\beta}(w)+\sum_{i=1}^{k}\sum_{w\in \widehat{Q}(r,v,m,i)} \varlimsup_{n\to \infty} d_r(\varepsilon,w)\frac{d_r(w,v_n)}{d_r(\varepsilon,v_n)}  \right)\leqslant  $$
    $$ \leqslant   \varlimsup_{m \to \infty} \left( \sum_{u\in \overline{Q}(1,v,m,k)}\mu_{v,\beta}(w)+\sum_{i=1}^{k}\sum_{w\in \widehat{Q}(r,v,m,i)} \varlimsup_{n\to \infty} d_r(\varepsilon,w)\frac{d_r(w,v_n\Lbag i\Rbag)}{d_r(\varepsilon,v_n)}  \right)=$$
    $$ =    \sum_{i=1}^{k} \varlimsup_{m \to \infty} \sum_{w\in \widehat{Q}(r,v,m,i)} \varlimsup_{n\to \infty} d_r(\varepsilon,w)\frac{d_r(w,v_n\Lbag i\Rbag)}{d_r(\varepsilon,v_n\Lbag i\Rbag)}\cdot \frac{d_r(\varepsilon,v_n\Lbag i\Rbag)}{d_r(\varepsilon,v_n)}.$$

    Заметим, что в силу Замечания \ref{betta}, в каждом слагаемом $\exists \beta_i\in(0,1],$ такие, что наше выражение равняется следующему:    
    $$   \sum_{i=1}^{k} \varlimsup_{m \to \infty}\sum_{w\in \widehat{Q}(r,v,m,i)}  \mu_{r,v\Lbag i \Rbag,\beta_i}  (w)\cdot \varlimsup_{n\to \infty}\frac{d_r(\varepsilon,v_n\Lbag i\Rbag)}{d_r(\varepsilon,v_n)}  \leqslant $$
    $$ \leqslant   \sum_{i=1}^{k} \varlimsup_{m \to \infty}\sum_{w\in \widehat{Q}(r,v,m,i)}  \mu_{r,v\Lbag i \Rbag,\beta_i}  (w)\cdot \varlimsup_{n\to \infty}r\cdot |v\langle k \rangle|\cdot\prod_{i=1}^{d(v_n)}\frac{g(v_n,i)+1}{g(v_n,i)} \leqslant  $$
    $$ \leqslant  \frac{2 r \cdot |v\langle k \rangle|}{\beta} \cdot \sum_{i=1}^{k} \varlimsup_{m \to \infty}\sum_{w\in \widetilde{Q}(r,v\Lbag i \Rbag,m,k)}  \mu_{r,v\Lbag i \Rbag,\beta_i}  (w) \leqslant  \frac{2 r \cdot |v\langle k \rangle|}{\beta} \cdot \sum_{i=1}^{k} \varlimsup_{m \to \infty}\sum_{u\in \overline{Q}(1,v\Lbag i \Rbag,m,k)}\sum_{w\in S_r(u)}  \mu_{r,v\Lbag i \Rbag,\beta_i}  (w) =$$
    $$ =  \frac{2 r \cdot |v\langle k \rangle|}{\beta} \cdot \sum_{i=1}^{k}  \varlimsup_{m \to \infty}\sum_{u\in \overline{Q}(1,v\Lbag i \Rbag,m,k)}  \mu_{v\Lbag i \Rbag,\beta_i}  (w)=0. $$    
\end{proof}

\begin{theorem}
Пусть $r\in\mathbb{N}_{\geqslant  2}.$ Тогда любая мера с границы Мартина графа $\mathbb{YF}^r$ эргодична.
\end{theorem}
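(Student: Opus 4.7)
The plan is to follow the same scheme that is used in \cite{BE,Evtuh3} to establish ergodicity of $\mu_{v,\beta}$ on $\mathbb{YF}$: apply Vershik's standard ergodicity criterion for central measures on a branching graph, which says that $\mu$ is ergodic if and only if for $\mu$-almost every trajectory $\omega=(v_m)_{m\geqslant 0}$ the sequence $(v_m)$ converges in the Martin sense to the single boundary point that defines $\mu$. The Plancherel case $\mu_{r,P}$ has already been settled by the preceding theorem, so the only remaining case is $\mu=\mu_{r,v,\beta}$ with $v\in\mathbb{YF}_\infty^{r,+}$ and $\beta\in(0,1]$.

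The main inputs are the two concentration theorems proved in this section. Under $\mu$, the marginal distribution of $v_m$ on $\mathbb{YF}_m^r$ puts weight $\mu_{r,v,\beta}(w)$ on each vertex $w$. The $\overline{R}$-theorem then yields $\pi(v_m)/\pi(v)\to\beta$ in $\mu$-probability, and the $\overline{Q}$-theorem yields $e(v_m,v)\to\infty$ in $\mu$-probability, which in turn forces the symbolic convergence $v_m\to v$. Hence the pair of parameters $(v,\beta)$ governing the Martin boundary description of $\mu$ is recovered from $v_m$ in $\mu$-probability.

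To upgrade this to almost-sure convergence I would pass to a subsequence $(m_j)$ along which both concentration probabilities are summable; Borel--Cantelli then implies that $\mu$-almost surely $v_{m_j}\to v$ symbolically and $\pi(v_{m_j})/\pi(v)\to\beta$ as $j\to\infty$. By the Martin-boundary description, the normalised quantities $d_r(\varepsilon,w)\cdot d_r(w,v_{m_j})/d_r(\varepsilon,v_{m_j})$ converge to $\mu_{r,v,\beta}(w)=\mu(w)$ for every $w\in\mathbb{YF}^r$ along this subsequence. Backward martingale convergence applied to the indicator of each cylinder set determined by $w$ then shows that the limit along the full sequence $(v_m)$ exists $\mu$-a.s.\ and coincides with the subsequence limit. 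Thus the Martin limit of a $\mu$-typical trajectory equals $\mu$ itself, so $\mu$ is extreme in the Choquet simplex of central measures on $\mathbb{YF}^r$, i.e.\ ergodic.

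The step I expect to be the main obstacle is the clean combination of the two in-probability convergences into a single almost sure identification of the Martin boundary point, together with the careful application of the backward martingale argument to rule out any dependence of the limit on the chosen subsequence. Thanks to the projection identities of Lemma \ref{suuum} and Corollary \ref{suum}, which reduce every relevant summation on $\mathbb{YF}^r$ to a corresponding summation on $\mathbb{YF}$, no genuinely new analytic difficulty arises beyond what already appears in the $1$-differential case treated in \cite{BE,Evtuh3}.
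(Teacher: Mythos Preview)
Your proposal is correct and follows essentially the same route as the paper: the paper's own proof consists of a single sentence stating that the argument is completely analogous to that of Corollary~11 in~\cite{Evtuh3}, and what you have sketched is precisely that argument transported to $\mathbb{YF}^r$, using the two concentration theorems for $\overline{R}$ and $\overline{Q}$ together with the projection identity of Corollary~\ref{suum} as the only new ingredients.
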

\begin{proof}
    Доказательство полностью аналогично доказательству 
Следствия 11\cite{Evtuh3}.
\end{proof}
\newpage

\section{Благодарности}
Работа выполнена в Санкт-Петербургском международном математическом институте имени Леонарда Эйлера при финансовой поддержке Министерства науки и высшего
образования Российской Федерации (соглашение № 075-15-2022-287 от 06.04.2022).

\newpage

\addcontentsline{toc}{section}{Список литературы}

\end{document}